\newlength{\captsize} \let\captsize=\footnotesize
\newlength{\captwidth} \setlength{\captwidth}{\textwidth}
\def\defemb#1#2{\expandafter\def\csname #1\endcsname
                              {\relax\ifmmode #2\else\hbox{$#2$}\fi}}
\def\ba{\begin{align}}
\def\ea{\end{align}}
\newcommand{\vectornorm}[1]{\left|\left|#1\right|\right|}
\newcommand{\abs} [1]{\left\lvert #1 \right\rvert}
\newcommand{\beq}[1]{ \begin{equation}\label{#1} }
\newcommand{\eeq}{\end{equation}}
\newcommand{\beb}{ \begin{bmatrix}}
\newcommand{\eeb}{\end{bmatrix}}
\newcommand{\bea}[1]{ \begin{eqnarray}\label{#1} }
\newcommand{\eea}{\end{eqnarray}}
\def\ll{[\![}
\def\rr{]\!]}
\def\Den#1{\relax\ifmmode \ll #1\rr \else\hbox{$\ll #1\rr$}\fi}
\newtheorem{definition}{Definition}[section]
\newtheorem{theorem}[definition]{Theorem}
\newtheorem{lemma}[definition]{Lemma}
\newtheorem{corollary}[definition]{Corollary}
\title{Matrix Compression using the Nystr\"{o}m Method}
\author{Arik Nemtsov$^1$ ~~~Amir Averbuch$^1$ ~~~Alon Schclar$^2$ \footnote{corresponding author}\\
${}^1$ School of Computer Science, Tel Aviv University, Tel Aviv 69978\\
${}^2$ School of Computer Science, \\
The Academic College of Tel Aviv-Yaffo ,Tel Aviv, 61083
}
\date{}
\begin{document}
\maketitle

\begin{abstract}

The Nystr\"{o}m method is routinely used for out-of-sample extension
of kernel matrices. We describe how this method can be applied to
find the singular value decomposition (SVD) of general matrices and
the eigenvalue decomposition (EVD) of square matrices. We take as an
input a matrix $M\in \mathbb{R}^{m\times n}$, a user defined integer
$s\leq min(m,n)$ and $A_M \in \mathbb{R}^{s\times s}$, a matrix
sampled from the columns and rows of $M$. These are used to construct an
approximate rank-$s$ SVD of $M$ in
$O\left(s^2\left(m+n\right)\right)$ operations. If $M$ is square,
the rank-$s$ EVD can be similarly constructed in $O\left(s^2
n\right)$ operations. Thus, the matrix $A_M$ is a compressed version of $M$.
We discuss the choice of $A_M$ and propose an algorithm that selects a good initial sample for a pivoted version of $M$. The proposed algorithm performs well for general matrices and kernel matrices whose spectra exhibit fast decay.

\end{abstract}
\par\noindent
Keywords: \{Compression, SVD, EVD, Nystr\"{o}m, out-of-sample extension\}\\

\section{ Introduction }

Low rank approximation of linear operators is an important problem
in the areas of scientific computing and statistical analysis.
Approximation reduces storage requirements for large datasets and
improves the runtime complexity of algorithms operating on the
matrix. When the matrix contains affinities between elements, low
rank approximation can be used to reduce the dimension of the
original problem (\cite{LLE,MDS,DiffusionMaps}) and to eliminate
statistical noise (\cite{PCA}).

Our approach involves the choice of a small sub-sample from the
matrix, followed by the application of the Nystr\"{o}m method for
out-of-sample extension. The Nystr\"{o}m method
(\cite{BakerNystrom}), which originates from the field of integral
equations, is a way of discretizing an integral equation using a
simple quadrature rule. When given an eigenfunction problem of the
form
\[\lambda f(x)=\int^b_a{M\left(x,y\right)f\left(y\right)dy}, \]
the Nystr\"{o}m method employs a set of $s$ sample points $y_1,\dots
,y_s$ that approximate $f(x)$ as
\[\lambda \tilde{f}\left(x\right)\triangleq \frac{b-a}{s}\sum^s_{j=1}{M\left(x,y_j\right)f\left(y_j\right)} .\]

In recent years, the Nystr\"{o}m method has gained widespread use in
the field of spectral clustering. It was first popularized by
\cite{FirstKernelNystrom} for sparsifying kernel matrices by
approximating their entries. The matrix completion approach of
\cite{Chung} also enables the approximation of eigenvectors. It was
now possible to use the Nystr\"{o}m method in order to speed up
algorithms that require the spectrum of a kernel matrix. Over time,
Nystr\"{o}m based out-of-sample extensions have been developed for a
wide range of spectral methods, including Normalized-Cut
(\cite{SpatioTemportalNystrom,NystromIndefKernel}), Geometric
Harmonics (\cite{GeometricHarmonics}) and others
(\cite{NystromKernelPca}).

 Other noteworthy methods for speeding up kernel based
algorithms, which are not applicable to the proposed setting of this
paper, are based on sampling \cite{A1}, convex optimization
\cite{A2} and integral equations. ACA \cite{A3,A4} is an important
example in the latter category. ACA can be regarded as an efficient
replacement of the SVD which is tailored to asymptotically smooth
kernels. The kernel function itself is not required. ACA uses only
few of the original entries for the approximation of the whole
matrix and it was shown to have exponential convergence when used as
part of the Nystr\"{o}m method.

In this paper, we present two extensions of the matrix completion
approach of \cite{Chung}. These allow us to form the SVD and EVD of
a general matrix through the application of the Nystr\"{o}m method
on a previously chosen sample.

In addition, we present a novel algorithm for selecting the initial
sample to be used with the Nystr\"{o}m method. Our algorithm is
applicable to general matrices whereas previous methods focused on
kernel matrices. The algorithm uses a pre-existing low-rank
decomposition of the input matrix. We show that our sample choice
reduces the Nystr\"{o}m approximation error.

The paper is organized as follows: Section \ref{sec:preliminaries}
describes the basic Nystr\"{o}m matrix form and the methods of
\cite{Chung} for finding the EVD of a Nystr\"{o}m approximated
symmetric matrix. Section \ref{sec:nystrom_like_svd} outlines a
Nystr\"{o}m-like method for out-of-sample extension of general
matrices, starting with the SVD of a sample matrix. In section
\ref{sec:mat_gen_decomp} we describe procedures that explicitly
generate the canonical SVD and EVD forms for general matrices.
Section \ref{sec:sample_choice} introduces the problem of sample
choice and presents results that bound the accuracy of the algorithm
in section \ref{sec:algorithm}. Section \ref{sec:algorithm} presents
our sample selection algorithm and analyzes its complexity.
Experimental results on general and kernel matrices are presented in
section \ref{sec:experiments}.

\section{ Preliminaries }
\label{sec:preliminaries}

\subsection{ Square Nystr\"{o}m Matrix Form }
\label{sec:matform}

Let $M\in {\mathbb R}^{n\times n}$ be a square matrix. We assume
that the $M$ can be decomposed as
\begin{equation}\label{eq:assym_m_decomposition}
M= \left[ \begin{array}{cc}
            A_M & B_M \\
            F_M & C_M
        \end{array} \right]
\end{equation}
where $A_M\in {\mathbb R}^{s\times s}, B_M\in {\mathbb R}^{s \times
\left(n-s\right)}, F_M\in {\mathbb R}^{\left(n-s\right)\times s}$
and $C_M\in {\mathbb R}^{\left(n-s\right)\times \left(n-s\right)}$.
The matrix $A_M$ is designated to be our sample matrix. The size of
our sample is $s$, which is the size of $A_M$.

Let $U\Lambda U^{-1}$ be the eigen-decomposition of $A_M$, where
$U\in {\mathbb R}^{s\times s}$ is the eigenvectors matrix and
$\Lambda\in {\mathbb R}^{s\times s}$ is the eigenvalues matrix. Let
$u^i\in {\mathbb R}^s$ be the column eigenvector belonging to
eigenvalue $\lambda_i$. We aim to extend the column eigenvector (the
discrete form of an eigenfunction) to the rest of $M$. Let
$\hat{u}^i = \left[ \begin{array}{cc}u^i & \tilde{u}^i\end{array}
\right]^T\in {\mathbb R}^{n}$ be the extended eigenvector, where
$\tilde{u}^i\in {\mathbb R}^{n-s}$ is the extended part. By applying
the Nystr\"{o}m method to $u^i$, we get the following form for the
$k^{th}$ coordinate in $\hat{u}^i$:
\begin{equation}\label{NYSEQN}
\lambda_i \hat{u}^i_k \simeq \frac{b-a}{s}\sum^s_{j=1}{M_{kj}\cdot
u^i_j}.
\end{equation}
By setting $\left[a,b\right]=[0,1]$ and presenting Eq.
\eqref{NYSEQN} in matrix product form we obtain
\begin{equation}\label{NYSEQN2}
\lambda_i \tilde{u}^i = \frac{1}{s}F_M\cdot u^i.
\end{equation}
This can be done for all the eigenvalues $\{\lambda_i\}_{i=1}^{s}$
of $A_M$. Denote $\tilde{U} = \left[ \begin{array}{ccc}\tilde{u}^1 &
\dots & \tilde{u}^s \end{array}\right]\in {\mathbb R}^{(n-s)\times
s}$. By placing all expressions of the form Eq. \eqref{NYSEQN2} side
by side we have $\tilde{U}\Lambda = F_M U$. Assuming the matrix
$A_M$ has non-zero eigenvalues (we return to this assumption
in section $\ref{sec:nystrom_accuracy}$), we obtain:
\begin{equation}\label{EXTVEC}
\tilde{U}=F_M U {\Lambda }^{-1}.
\end{equation}
Analogically, we can derive a matrix representation for extending
the left eigenvectors of $M$, denoted as $\tilde{V}\in {\mathbb
R}^{s\times n-s}$:
\begin{equation}\label{EXTLEFTVEC}
\tilde{V}={\Lambda }^{-1}U^{-1}B_M.
\end{equation}

Combining Eqs. \eqref{EXTVEC} and \eqref{EXTLEFTVEC} with the
eigenvectors of $A_M$ yields the full left and right approximated
eigenvectors:
\begin{equation}\label{eq:assym_ext_vec}
\hat{U} = \left[ \begin{array}{c}
U \\
F_MU{\Lambda }^{-1} \end{array} \right] ,\ \ \hat{V}=\left[
\begin{array}{cc} U^{-1} & {\Lambda }^{-1}U^{-1}B_M \end{array}
\right].
\end{equation}
The explicit ``Nystr\"{o}m'' representation of $\hat{M}$ becomes:
\begin{equation}\label{eq:assym_matrix_nys_form}
\begin{array}{c}
\hat{M}=\hat{U}\Lambda \hat{V}=\left[ \begin{array}{c}
U \\
F_MU{\Lambda }^{-1} \end{array} \right]\Lambda \left[
\begin{array}{cc} U^{-1} & {\Lambda }^{-1}U^{-1}B_M \end{array}
\right] = \left[ \begin{array}{cc}
A_M & B_M \\
F_M & F_MA^+_MB_M \end{array}
\right] = \\
 \left[ \begin{array}{c}
A_M \\
F_M \end{array} \right]A^+_M\left[ \begin{array}{cc} A_M & B_M
\end{array} \right]
\end{array}
\end{equation}
where $A^+_M$ denotes the pseudo-inverse of $A_M$.

Equation \eqref{eq:assym_matrix_nys_form} shows that the Nystr\"{o}m
extension does not modify $A_M, B_M$ and $F_M$, and that it
approximates $C_M$ by $F_MA^+_MB_M$.

\subsection{ Decomposition of Symmetric Matrices }

The algorithm given in \cite{Chung} is a commonly used method for
SVD approximation of symmetric matrices. For a given matrix, it
computes the SVD of its Nystr\"{o}m approximated form. The SVD and
EVD of a symmetric matrix coincide up to the signs of the singular
(eigen-) values. Therefore the SVD can approximate both simultaneously.
We describe the method of  \cite{Chung} in section
\ref{sec:symmetric_svd_gen sol}.

\subsubsection{ Symmetric Nystr\"{o}m Matrix Form }

When $M$ is symmetric, the matrix $M$ has the decomposition
\begin{equation}\label{SYMDEC}
M=\left[ \begin{array}{cc}
A_M & B_M \\
B^T_M & C_M \end{array} \right]
\end{equation}
where $A_M\in {\mathbb R}^{s\times s},B_M\in {\mathbb R}^{s\times
\left(n-s\right)}$ and $C_M\in {\mathbb R}^{(n-s)\times(n-s)}$. We
replace $F_M$ in Eq. \eqref{eq:assym_m_decomposition} with $B_M^T$.

By using reasoning similar to section \ref{sec:matform}, we can
express the right and left approximated eigenvectors as:
\begin{equation}\label{SYMEXTVEC}
\hat{U}=\left[ \begin{array}{c}
U \\
B^T_MU{\Lambda }^{-1} \end{array} \right],\ \ \hat{V}=\left[
\begin{array}{cc} U^{-1} & {\Lambda }^{-1}U^{-1}B_M \end{array}
\right].
\end{equation}
The explicit ``Nystr\"{o}m'' representation of $\hat{M}$ becomes:
\begin{equation}\label{SYM_EXPLICIT_NYS}
\begin{array}{c}
\hat{M}=\hat{U}\Lambda \hat{V}=\left[ \begin{array}{c}
U \\
B^T_MU{\Lambda }^{-1} \end{array} \right]\Lambda \left[
\begin{array}{cc} U^{-1} & {\Lambda }^{-1}U^{-1}B_M \end{array}
\right]= \left[ \begin{array}{cc}
A_M & B_M \\
B^T_M & B_M^T A^+_MB_M \end{array}
\right]= \\
\left[ \begin{array}{c}
A_M \\
B^T_M \end{array} \right]A^+_M\left[ \begin{array}{cc} A_M & B_M
\end{array} \right].
\end{array}
\end{equation}

\subsubsection{ Construction of SVD for Symmetric $\hat{\mathbf{M}}$ }
\label{sec:symmetric_svd_gen sol}

Our goal is to find the $s$ leading eigenvalues and eigenvectors of
$\hat{M}$ without explicitly forming the entire matrix.

We begin with the decomposition of $M$ as in Eq. \eqref{SYMDEC}. The
approximation technique in \cite{Chung} uses the standard
Nystr\"{o}m method in Eq. \eqref{SYMEXTVEC} to obtain $\hat{U}$.
Then, the algorithm forms the matrix $Z=\hat{U}{\Lambda }^{1/2}$
such that $\hat{M}=ZZ^T=\hat{U}\Lambda {\hat{U}}^T$. The symmetric
$s\times s$ matrix $Z^TZ$ is diagonalized as $F\Sigma F^T$. The
eigenvectors of $\hat{M}$ are given by $U_o=ZF{\Sigma }^{-1/2}$ and
the eigenvalues are given by $\Sigma $. To qualify for use in the
SVD, $U_o$ and $\Sigma$ must meet the following requirements:

\begin{enumerate}
\item The columns of $U_o$ must be orthogonal. Namely, $U^T_oU_o = I$.
\item The SVD form of $U_o$ and $\Sigma$ must form $\hat{M}$. Formally, $\hat{M} = U_o\Sigma U^T_o$.
\end{enumerate}

The following identities can be readily verified using our
expressions for $U_o$ and $\Sigma$:
\begin{enumerate}
\item Bi-orthogonality: $U^T_oU_o={\Sigma }^{-1/2}F^TZ^TZF{\Sigma }^{{\rm -}{\rm 1/2}}={\Sigma }^{{\rm -}{\rm 1/2}}F^T\left(F\Sigma F^T\right)F{\Sigma }^{{\rm -}{\rm 1/2}}=I ;$
\item SVD form: $U_o\Sigma U^T_o=ZF{\Sigma }^{{\rm -}{\rm 1/2}}\cdot \Sigma \cdot {\Sigma }^{{\rm -}{\rm 1/2}}F^TZ^T=ZZ^T=\hat{M} .$
\end{enumerate}

The computational complexity of the algorithm is $O\left(s^2
n\right)$, where $s$ is the sample size and $n$ is the number of
rows and columns of $M$.  The bottleneck is in the computation of
the matrix product $Z^TZ$.

\subsubsection{ A Single-Step Solution for the SVD of $\hat{\mathbf{M}}$ }
\label{sec:symmetric_svd_oneshot_sol}

The ``one-shot'' solution in \cite{Chung} assumes that $A_M$ has a
square root matrix $A^{1/2}_M$. This assumption is true if the
matrix is positive definite. Otherwise, it imposes some limitations
on $A_M$. These will be discussed in section $\ref{sec:single_step_prereq}$.

Let $A^{-1/2}_M$ be the pseudo-inverse of the square root matrix of
$A_M$. Denote $G^T=A^{-1/2}_M\left[ \begin{array}{cc} A_M & B_M
\end{array}\right]$. From this definition we have $\hat{M}=GG^T$.
The matrix $S\in {\mathbb R}^{s\times s}$ was defined in
\cite{Chung}, where $S=G^TG=A_M+A^{-1/2}_MB_MB^T_MA^{-1/2}_M$. $S$
is fully decomposed as $U_S{\Lambda }_SU^T_S$. The orthogonal
eigenvectors of $\hat{M}$ are formed as $U_o=GU_S{\Lambda
}^{-1/2}_S$ and the eigenvalues are given in ${\Lambda }_S$.

The following required identities, as in section
\ref{sec:symmetric_svd_gen sol}, can again be verified as follows:
\begin{enumerate}
\item  Bi-orthogonality: \\ $U^T_oU_o={\Lambda }^{-1/2}_SU^T_SG^TGU_S{\Lambda }^{-1/2}_S={\Lambda }^{-1/2}_SU^T_SSU_S{\Lambda }^{-1/2}_S={\Lambda }^{-1/2}_SU^T_S\cdot U_S{\Lambda }_SU^T_S\cdot U_S{\Lambda }^{-1/2}_S=I.$
\item  SVD form: $U_o{\Lambda }_SU^T_o=GU_S{\Lambda }^{-1/2}_S\cdot {\Lambda }_S\cdot {\Lambda}^{-1/2}_SU^T_SG^T=GG^T=\hat{M}.$
\end{enumerate}

The computational complexity remains the same (the bottleneck of the
algorithm is the formation of $B_MB^T_M$). However this version is
numerically more accurate. According to \cite{Chung}, the extra
calculations in the general method of solution lead to an increase
in the loss of significant digits.

\section{ Nystr\"{o}m-like SVD approximation }
\label{sec:nystrom_like_svd}

The SVD of a matrix can also be approximated via the basic
quadrature technique of the Nystr\"{o}m method. In this case, we do
not require an eigen-decomposition. Therefore,  $M$ does not
necessarily have to be square. Let $M\in{\mathbb R}^{m\times n}$ be
a matrix with the decomposition given in Eq.
\eqref{eq:assym_m_decomposition}. We begin with the SVD form
$A_M=U\Lambda H$ where $U,H\in {\mathbb R}^{s\times s}$ are unitary
matrices and ${\Lambda}\in {\mathbb R}^{s\times s}$ is diagonal. We
assume that zero is not a singular value of $A_M$. Accordingly, $U$
can be formulated as:
\begin{equation} \label{part_svd_form}
U = A_M H {\Lambda}^{-1}.
\end{equation}

Let $u^i,h^i\in {\mathbb R}^s$ be the $i^{th}$ columns in $U$ and
$H$, respectively. Let $u^i=\{ u^i_l \}_{l=1}^{s}$ be the partition
of $u^i$ into elements. By using Eq. \eqref{part_svd_form}, each
element $u^i_l$ can be presented as the sum $u^i_l =
\frac{1}{\lambda_i}\sum_{j=1}^{n} { M_{lj}\cdot h^i_j }$.

We can use the entries of $F_M$ as interpolation weights for
extending the singular vector $u^i$ to the $k^{th}$ row of $M$,
where $s+1 \leq k \leq n$. Let $\tilde{u}^i = \{ \tilde{u}^i_{k-s}
\}_{k=s+1}^{n} \in {\mathbb R}^{n-s}$ be a column vector that
contains all the approximated entries. Each element
$\tilde{u}^i_{k-s}$ will be calculated as $\tilde{u}^i_{k-s} =
\frac{1}{\lambda_i}\sum_{j=1}^{n} { M_{kj}\cdot h^i_j }$. Therefore,
the matrix form of $\tilde{u}^i$ becomes $\tilde{u}^i =
\frac{1}{\lambda_i} F_M\cdot h^i$.

Putting together all the $\tilde{u}^i$'s as $\tilde{U} =
\left[\begin{array}{cccc} \tilde{u}^1 & \tilde{u}^2 & \dots &
\tilde{u}^s\end{array}\right] \in {\mathbb R}^{n-s\times s}$, we get
$\tilde{U}=F_MH{\Lambda }^{-1}$.

The basic SVD equation of $A_M$ can also be written as $H=A_M^T U
\Lambda^{-1}$. We approximate the right singular vectors of the
out-of-sample columns by employing a symmetric argument. We obtain
$\tilde{H}=B^T_MU{\Lambda }^{-1}$.

The full approximations of the left and right singular vectors of
$\hat{M}$, denoted by $\hat{U}$ and $\hat{H}$, respectively, are
\begin{equation}\label{eq:nystrom_svd_init_form}
\hat{U}=\left[ \begin{array}{c}
U \\
F_MH{\Lambda }^{-1} \end{array} \right],\ \ \hat{H}=\left[
\begin{array}{c}
H \\
B^T_MU{\Lambda }^{-1} \end{array} \right].
\end{equation}

The explicit ``Nystr\"{o}m'' form of $\hat{M}$ becomes
\begin{equation}\label{eq:m_hat_general_form}
\begin{array}{c}
\hat{M}=\hat{U}\Lambda {\hat{H}}^T=\left[ \begin{array}{c}
U \\
F_MH{\Lambda }^{-1} \end{array} \right]\Lambda \left[
\begin{array}{cc} H^T & {\Lambda }^{-1}U^TB_M \end{array}
\right]=\left[ \begin{array}{cc}
A_M & B_M \\
F_M & F_MA^+_MB_M \end{array}
\right]= \\
\left[ \begin{array}{c}
A_M \\
F_M \end{array} \right]A^+_M\left[ \begin{array}{cc} A_M & B_M
\end{array} \right]
\end{array}
\end{equation}
where $A^+_M$ denotes the pseudo-inverse of $A_M$. $\hat{M}$ does
not modify $A_M, B_M$ and $F_M$ but approximates $C_M$ by $F_M A_M^+
B_M$. Note that the Nystr\"{o}m matrix form of the SVD is similar to
Eq. \eqref{eq:assym_matrix_nys_form}, which is the Nystr\"{o}m form
of the EVD matrix.

\section{ Decomposition of General Matrices }
\label{sec:mat_gen_decomp}

We will refer to a decomposition of $M$ given in Eq.
\eqref{eq:assym_m_decomposition} with the corresponding
decomposition into $A_M,B_M,F_M$ and $C_M$. $\hat{M}$ denotes the
approximated Nystr\"{o}m matrix.

This section presents procedures for explicit orthogonalization of
the singular-vectors and eigenvectors of $\hat{M}$. Starting with
$\hat{M}$ in the form of Eqs. \eqref{eq:assym_matrix_nys_form} and
\eqref{eq:m_hat_general_form}, we find its canonical SVD and EVD
form, respectively. Constructing these representations takes time
and space that are linear in the dimensions of $M$.

\subsection{ Construction of EVD for $\hat{\textbf{M}}$ }
\label{sec:evd_general_sol}

Let $M$ be a square matrix. We will approximate the eigenvalue
decomposition of $\hat{M}$ without explicitly forming $\hat{M}$.

We begin with a matrix $M$ that is partitioned as in Eq.
\eqref{eq:assym_m_decomposition}. By explicitly employing the
Nystr\"{o}m method, we construct $\hat{U}$ and $\hat{V}$ as defined
in Eq. \eqref{eq:assym_ext_vec}. Then, we proceed by defining the
matrices $G_U=\hat{U}{\Lambda }^{1/2}$ and $G_V={\Lambda
}^{1/2}\hat{V}$. We directly compute the EVD of $G_VG_U$ as $F\Sigma
F^{-1}$. The eigenvalues of $\hat{M}$ are given by $\Sigma $ and the
right and left eigenvectors are $U_o=G_UF{\Sigma }^{-1/2}$ and
$V_o={\Sigma }^{-1/2}F^{-1}G_V$, respectively.

The left and right eigenvectors are mutually orthogonal since
\[V_oU_o={\Sigma }^{-1/2}F^{-1}G_V\cdot G_UF{\Sigma }^{-1/2}={\Sigma }^{-1/2}F^{-1}\cdot F\Sigma F^{-1}\cdot F{\Sigma }^{-1/2}=I. \]
The EVD form of $U_o, V_o$ and $\Sigma$ gives $\hat{M}$, as we see
from
\[U_o\Sigma V_o=G_UF{\Sigma }^{-1/2}\cdot \Sigma \cdot {\Sigma }^{-1/2}F^{-1}G_V=G_UG_V=\hat{U}{\Lambda }^{1/2}\cdot {\Lambda }^{1/2}\hat{V}=\hat{M}. \]
These two properties qualify $U_o\Sigma V_o$ as the EVD of
$\hat{M}$.

When $M$ is symmetric, the matrix $G_V$ is simply $G^T_U$. By using
the terminology in section \ref{sec:symmetric_svd_gen sol}, we
denote $G_V=Z$ and the matrix $G_VG_U$ is transformed into $ZZ^T$.
From here on the method of solution in section
\ref{sec:symmetric_svd_gen sol} coincides with the current section.
Hence, this form of EVD approximation generalizes the symmetric
case.

The computational complexity is $O(s^2n)$, where $s$ is the sample
size (the size of $A_M$) and $n$ is the size of $M$. The
computational bottleneck is in the formation of $G_VG_U$.

\subsubsection{ A Single-Step Solution for the EVD for $\hat{\mathbf{M}}$ }
\label{sec:evd_oneshot_sol} This solution method assumes that $A_M$
has a square root matrix $A^{1/2}_M$. From this assumption, we can
modify the algorithm in section \ref{sec:evd_general_sol} to
construct the EVD of $\hat{M}$ with fewer steps.

We define the matrices $G_U$ and $G_V$ to be
\[G_U=\left[ \begin{array}{c}
A_M \\
F_M \end{array} \right]A^{-1/2}_M,\ \ \ G_V=A^{-1/2}_M\left[
\begin{array}{cc} A_M & B_M \end{array} \right] .\] We proceed to
explicitly compute the eigen-decomposition of $G_VG_U\in {\mathbb
R}^{s\times s}$ as $G_VG_U=F\Sigma F^{-1}$. The eigenvalues of
$\hat{M}$ are given by $\Sigma $ and the right and left eigenvectors
of $\hat{M}$ are formed by $U_o=G_UF{\Sigma }^{-1/2}$ and
$V_o={\Sigma }^{-1/2}F^{-1}G_V$, respectively. Again, we can verify
the eigenvectors are mutually orthogonal:
\[V_oU_o={\Sigma }^{-1/2}F^{-1}G_V\cdot G_UF{\Sigma }^{-1/2}={\Sigma }^{-1/2}F^{-1}\cdot G_VG_U\cdot F{\Sigma }^{-1/2}={\Sigma }^{-1/2}F^{-1}\cdot F\Sigma F^{-1}\cdot F{\Sigma }^{-1/2}=I,\]
and the matrices $U_o, V_o$ and ${\Lambda}_S$ form $\hat{M}$ as
\[U_o{\Lambda }_SV_o=G_UF{\Sigma }^{-1/2}\cdot \Sigma \cdot {\Sigma }^{-1/2}F^{-1}G_V=G_UG_V=\left[ \begin{array}{c}
A_M \\
F_M \end{array} \right]A^{-1/2}_M\cdot A^{-1/2}_M\left[
\begin{array}{cc} A_M & B_M \end{array} \right]=\hat{M}.\]
The reduction to the symmetric case is straightforward here as well.
We have $G_V=G^T_U$ when $M$ is symmetric. By using the terms of
section \ref{sec:symmetric_svd_oneshot_sol}, we have $G^T_U=G_V=G$.
The expression $G_VG_U$ turns into $G^TG$. After that point the
methods of solution coincide.

Again, the algorithm takes $O(s^2n)$ operations due to the need to
calculate $G_VG_U$. Compared to the solution given in section
\ref{sec:evd_general_sol}, the single-step solution performs fewer
matrix operations. Therefore, it achieves better numerical accuracy.

\subsection{ Construction of SVD for $\hat{\textbf{M}}$ }
\label{sec:svd_general_sol}

Let $M$ be a general $m\times n$ matrix with the decomposition in
Eq. \eqref{eq:assym_m_decomposition}. Given an initial sample $A_M$,
we present an algorithm that efficiently computes the SVD of
$\hat{M}$ (defined by Eq. \eqref{eq:assym_matrix_nys_form}).

We explicitly compute the SVD of $A_M$ and use the technique
outlined in section \ref{sec:nystrom_like_svd} to obtain $\hat{U}$
and $\hat{H}$ as in Eq. \eqref{eq:nystrom_svd_init_form}. We form
the matrices $Z_U=\hat{U}{\Lambda }^{1/2}$ and $Z_H=\hat{H}{\Lambda
}^{{\rm 1/2}}$. We proceed by forming the symmetric $s\times s$
matrices $Z^T_UZ_U$ and $Z^T_HZ_H$ and compute their SVD as
$Z^T_UZ_U=F_U{\Sigma }_UF^T_U$ and $Z^T_HZ_H=F_H{\Sigma }_HF^T_H$,
respectively. The next stage derives an SVD form for the $s\times s$
matrix $D={\Sigma }^{1/2}_UF^T_UF_H{\Sigma }^{1/2}_H$. This is given
explicitly by computing $D=U_D{\Lambda }_DH^T_D$. The singular
values of $\hat{M}$ are given in ${\Lambda }_D$ and the leading left
and right singular vectors of $\hat{M}$ are $U_o=Z_UF_U{\Sigma
}^{-1/2}_UU_D$ and $H_o=Z_HF_H{\Sigma }^{-1/2}_HH_D$, respectively.
The columns of $U_o$ and $H_o$ are orthogonal since
\[U^T_oU_o=U^T_D{\Sigma }^{-1/2}_UF^T_UZ^T_U\cdot Z_UF_U{\Sigma }^{-1/2}_UU_D=U^T_D{\Sigma }^{-1/2}_UF^T_U\cdot F_U{\Sigma }_UF^T_U\cdot F_U{\Sigma }^{-1/2}_UU_D=U^T_DU_D=I,\]
\[H^T_oH_o=H^T_D{\Sigma }^{-1/2}_HF^T_HZ^T_H\cdot Z_HF_H{\Sigma }^{-1/2}_HH_D=H^T_D{\Sigma }^{-1/2}_HF^T_H\cdot F_H{\Sigma }_HF^T_H\cdot F_H{\Sigma }^{-1/2}_HH_D=H^T_DH_D=I.\]
The SVD of $\hat{M}$ is formed by using $U_o, H_o$ and $V_D$
\[\begin{array}{c}
U_o{\Lambda_D }_oH^T_o=Z_UF_U{\Sigma }^{-1/2}_UU_D\cdot {\Lambda }_D\cdot H^T_D{\Sigma }^{-1/2}_HF^T_HZ^T_H=Z_UF_U{\Sigma }^{-1/2}_U\cdot D\cdot {\Sigma }^{-1/2}_HF^T_HZ^T_H= \\
= Z_UF_U{\Sigma }^{-1/2}_U\cdot {\Sigma }^{1/2}_UF^T_UF_H{\Sigma
}^{1/2}_H\cdot {\Sigma }^{-1/2}_HF^T_HZ^T_H=Z_UZ^T_H=\hat{U}{\Lambda
}^{1/2}\cdot {\Lambda }^{1/2}{\hat{H}}^T=\hat{M}.
\end{array}\]

When $M$ is symmetric, this solution method coincides with the
method in section \ref{sec:symmetric_svd_gen sol}. The matrices
$Z_U$ and $Z_H$ correspond to $Z$ in section
\ref{sec:symmetric_svd_gen sol}. The matrix $D$ becomes the diagonal
matrix $\Sigma$ of the symmetric case. The computational complexity
of the procedure is $O\left(s^2\left(m+n\right)\right)$. The
bottleneck is the computation of $Z^T_UZ_U$ and $Z^T_HZ_H$.

\subsubsection{  A Single-Step Solution for the SVD of $\hat{\mathbf{M}}$ }
\label{sec:svd_oneshot_sol} This solution method assumes that $A_M$
has a square root matrix $A^{1/2}_M$. Similar to section
\ref{sec:evd_oneshot_sol}, this assumption allows us to modify the
algorithm of the general case to achieve the same result in fewer
steps.

Let $A^{-1/2}_M$ be the pseudo-inverse of the square root matrix of
$A_M$. We begin by forming the matrices $G_U$ and $G_H$ such that
\[G_U=\left[ \begin{array}{c}
A_M \\
F_M \end{array} \right]A^{-1/2}_M,\ \ G_H={\left(A^{-1/2}_M\left[
\begin{array}{cc} A_M & B_M \end{array} \right]\right)}^T . \] The
symmetric matrices $G^T_UG_U$ and $G^T_HG_H$ are diagonalized by
$G^T_UG_U=F_U{\Sigma }_UF^T_U$ and  $G^T_HG_H=F_H{\Sigma }_HF^T_H$.
From these parts we form $D={\Sigma }^{1/2}_UF^T_UF_H{\Sigma
}^{1/2}_H$ which is explicitly diagonalized as $D=U_D{\Lambda
}_DH^T_D$. The singular values of $\hat{M}$ are given by ${\Lambda
}_D$ and the left and right singular vectors are given by
$U_o=G_UF_U{\Sigma }^{-1/2}_UU_D$ and $H_o=G_HF_H{\Sigma
}^{-1/2}_HH_D$, respectively.

As in section \ref{sec:svd_general_sol}, we can verify the
identities that make this decomposition a valid SVD. The singular
vectors are orthogonal:
\[U^T_oU_o=U^T_D{\Sigma }^{-1/2}_UF^T_UG^T_U\cdot G_UF_U{\Sigma }^{-1/2}_UU_D=U^T_D{\Sigma }^{-1/2}_UF^T_U\cdot F_U{\Sigma }_UF^T_U\cdot F_U{\Sigma }^{-1/2}_UU_D=U^T_DU_D=I,\]
\[H^T_oH_o=H^T_D{\Sigma }^{-1/2}_HF^T_HG^T_H\cdot G_HF_H{\Sigma }^{-1/2}_HH_D=H^T_D{\Sigma }^{-1/2}_HF^T_H\cdot F_H{\Sigma }_HF^T_H\cdot F_H{\Sigma }^{-1/2}_HH_D=H^T_DH_D=I.\]
The SVD is formed by $U_o, H_o$ and ${\Lambda}_D$:
\[
\begin{array}{cl}
U_o{\Lambda }_DH^T_o=G_UF_U{\Sigma }^{-1/2}_UU_D\cdot {\Lambda }_D\cdot H^T_D{\Sigma }^{-1/2}_HF^T_HG^T_H=G_UF_U{\Sigma }^{-1/2}_U\cdot D\cdot {\Sigma }^{-1/2}_HF^T_HG^T_H= \\
=G_UF_U{\Sigma }^{-1/2}_U\cdot {\Sigma }^{1/2}_UF^T_UF_H{\Sigma
}^{1/2}_H\cdot {\Sigma }^{-1/2}_HF^T_HG^T_H=G_UG^T_H=\left[
\begin{array}{c}
A_M \\
F_M \end{array} \right]A^{-1/2}_M\cdot A^{-1/2}_M\left[
\begin{array}{cc} A_M & B_M \end{array} \right]=\hat{M}.
\end{array}
\]

If $M$ is symmetric, this method reduces to the single-step solution
described in section \ref{sec:symmetric_svd_oneshot_sol}. The
matrices $G_U$ and $G_H$ correspond to $G$ in the symmetric case.
The matrix $D$ becomes ${\Lambda }_S$.

The computational complexity of the procedure remains
$O\left(s^2\left(m+n\right)\right)$. The computational bottleneck of
the algorithm is in the formation of $G^T_UG_U$.

\subsection{ Prerequisite for the Single-Step method }
\label{sec:single_step_prereq}

The single-step methods, described in sections
\ref{sec:symmetric_svd_oneshot_sol}, \ref{sec:evd_oneshot_sol} and
\ref{sec:svd_oneshot_sol}, require that $A_M$ have a square root
matrix.

When a matrix is positive semi-definite, a square root can be found
via the Cholesky factorization algorithm (\cite{MatrixComputations}
chapter 4.2.3). But positive-definiteness is not a necessary
prerequisite. For example, the square root of a diagonalizable
matrix can be found via its diagonalization. If $A_M=U\Lambda
U^{-1}$, then, $A^{1/2}_M=U{\Lambda }^{1/2}U^{-1}$. In this case,
the matrix does not need to be invertible.

It can be shown that under a complex realm, every non-singular
matrix has a square root. An algorithm for calculating the square
root for a given non-singular matrix is given in \cite{MatrixSqrt}.
This suggests a way of assuring the existence of a square root
matrix. We can make $A_M$ non-singular, or equivalently, a full rank
matrix.

The rank of $A_M$ will also have a role in bounding the
approximation error of the Nystr\"{o}m procedure. This will be
elaborated in section \ref{sec:nystrom_accuracy}.

\section{ Choice of Sub-Sample }
\label{sec:sample_choice}

The choice of initial sample for performing the Nystr\"{o}m
extension is an important part in the approximation procedure. The
sample matrix $A_M$ is determined by permutation of the rows and
columns of $M$ (as given in Eq. \eqref{eq:assym_m_decomposition}).
Our goal is to choose a (possibly constrained) permutation of $M$
such that the resulting matrix can be approximated more accurately
by the Nystr\"{o}m method. Here accuracy is measured by $L_2$
distance between the pivoted version of $M$ and the Nystr\"{o}m
approximated version. This notion is made precise in section
\ref{sec:nystrom_accuracy}.

We allow for complete pivoting in the choice of a permutation for
$M$. This means that both columns and rows can be independently
permuted. This kind of pivoting does not generally preserve the
eigenvalues and eigenvectors of the matrix. However, the singular
values of the matrix remain unchanged and the singular vectors are
permuted. Formally, let $E_r$ and $E_c$ be the row and column
permutation matrices, respectively. Using the SVD of $M$, the
pivoted matrix is decomposed as $E_r M E_c = E_r U \Sigma V^T E_c =
\left(E_r U\right) \Sigma \left(V^T E_c\right)$. Row and column
permutations leave $U$ and $V^T$ unitary. Therefore $\left(E_r
U\right) \Sigma \left(V^T E_c\right)$ is the SVD of $E_r M E_c$. The
singular vectors of $M$ can be easily regenerated by permuting the
left and right singular vectors of $E_r M E_c$ by $E_r^{-1}$ and
$E_c^{-1}$ respectively.

Section \ref{sec:nystrom_accuracy} shows the choice of $A_M$
determines the Nystr\"{o}m approximation error. Hence, the problem
of choosing a sample is equivalent to choosing the rows and columns
of $M$ whose intersection forms $A_M$. Therefore, it makes sense to
use the size $s$ of $A_M$ as our sample size. This size largely
determines the time and space complexity of the presented
approximation procedures. The complexities are
$O\left(s^2\left(m+n\right)\right)$ and
$O\left(s\left(m+n\right)\right)$, respectively.

\subsection{ Related Work on Sub-Sample Selection }
\label{sec:gram_matrices_related_work} Previous works on sub-sample
selection focused on kernel matrices. These were done for symmetric
matrices where the entries represent affinities. In these settings, we
can use a single permutation for the columns and rows without
changing the original meaning of the matrix. This pivoting variant
is called symmetric pivoting. Sample selection algorithms for kernel
matrices try to find a permutation matrix $E_p$ such that $E_p^T M
E_p$ is most accurately approximated by the Nystr\"{o}m method.

The simplest sample selection method is based on random sampling. It
works well for dense image data (\cite{Chung}). Random sampling is
also used in \cite{SamplePcaGreedy} while employing a greedy
criterion that helps to determine the quality of the sample. A
different greedy approach for sample selection is used in
\cite{SampleGreedy}, where a new point is added to the sample based
on its distance from a constrained linear combination of previously
selected points.

In \cite{SampleKmeans}, the \emph{k}-means clustering algorithm is
used for selecting the sub-sample. The \emph{k}-means cluster
centers are shown to minimize an error criterion related to the
Nystr\"{o}m approximation error. Finally,  Incomplete Cholesky
Decomposition (ICD) (\cite{ICD}) employs the pivoted Choleksy
algorithm and uses a greedy stopping criterion to determine the
required sample size for a given approximation accuracy.

The Cholesky decomposition of a matrix factors it into $Z^T Z$,
where $Z$ is an upper triangular matrix. Initially, $Z = 0$. The ICD
algorithm applies the Cholesky decomposition to $M$ while
symmetrically pivoting the columns and rows of $M$ according to a
greedy criterion. The algorithm has an outer loop  that scans the
columns of $M$ according to a pivoting order. The results for each
column determine the next column to scan. This loop is terminated
early after $s$ columns were scanned by using a heuristic on the
trace of the residual $Z^T Z - M$. This algorithm (\cite{ICD})
approximates $M$. This is equivalent to a Nystr\"{o}m approximation
where the initial sample is taken as the intersection of the pivoted
columns and rows.

When $M$ is a Gram matrix, it can be expressed as the product of two
matrices. Let $M$ be decomposed into $M=X^T X$ where $X\in{\mathbb
R}^{n\times n}$. The special properties of $M$ were exploited
differently in \cite{NystromDrineas}. Specifically, the fact that
$M_{ii}$ is the norm of the column $X_i$ is used. A non-Gram matrix
requires $O(n^2)$ additional operations to compute $X_i^T X_i$,
which is impractical for large matrices. Once the norms of the
columns in $X$ are known, a method similar to \cite{LinearTimeSvd}
is used to choose a good column sample from $X$. The intersection in
$M$ of the pivoted columns and the corresponding rows is a good
choice for $A_M$. The Nystr\"{o}m procedure is then performed
similarly to what was described in  section
\ref{sec:symmetric_svd_gen sol}. The runtime complexity of the
algorithm in \cite{NystromDrineas} is $O(n)$.

\subsection{ Preliminaries }

\begin{definition}\textbf{Approximate `thin' Matrix Decomposition.} Given a matrix $M\in {\mathbb R}^{m\times n}$. A "thin" matrix decomposition is an approximation of the form $M=GS$ where $G\in {\mathbb R}^{m\times k}$, $S\in {\mathbb R}^{k\times n}$ and $k\leq min(m,n)$.
\end{definition}
This form effectively approximates $M$ using a rank-$k$ matrix
product. A good example for such an approximation is the truncated
rank-$k$ SVD. It approximates a $m\times n$ matrix as $U\Lambda
V^T$, where $U\in {\mathbb R}^{m\times k}, \Lambda\in {\mathbb
R}^{k\times k}$ and $V\in{\mathbb R}^{n\times k}$. When this
decomposition is employed, we can choose, for example, $G=U,
S=\Lambda V^T$. Many algorithms
(\cite{LinearTimeSvd,SVDDeshpande,SVDHarPeled,SVDSarlos}) exist for
approximating the rank-$k$ SVD with a runtime close to $O(mn)$.

Truncated SVD is a popular choice, but it is by no means the only
one. Other examples include truncated pivoted QR
(\cite{PivotedTruncQr}) or the interpolative decomposition (ID) as
outlined in \cite{ID}.

\begin{definition}\textbf{Numerical Rank.}\label{def:numer_rank} A matrix $A$ has numerical rank $r$  with respect to a threshold $\epsilon$ if ${\sigma }_{r+1}(A)$ is the first singular value such that
\[\frac{{\sigma }_1\left(A\right)}{{\sigma }_{r+1}(A)}>\epsilon .\]
\end{definition}
This definition generalizes the $L_2$ condition number (${\kappa
}_2\left(A\right)$), since it also applies to non-invertible and
non-square matrices.

\begin{definition}\textbf{ Rank Revealing ${\mathbf {QR}}$ Decomposition (RRQR).}
Let $A\in {\mathbb R}^{m\times n}$ be a matrix and let $k$ be a user
defined threshold. A RRQR algorithm finds a permutation matrix $E$
such that $AE$ has a $QR$ decomposition with special properties.
Formally, we write $AE=QR$ such that $Q$ is an orthogonal matrix and
$R$ is upper triangular. Let $R$ have the following decomposition:
\begin{equation}\label{eq:r_from_qr}
R=\left[ \begin{array}{cc}
R_{11} & R_{12} \\
0 & R_{22} \end{array} \right]
\end{equation}
where $R_{11}\in {{\mathbb R}}^{k\times k},R_{12}\in {\mathbb
R}^{k\times\left(n-k\right)}$ and $R_{22}\in {\mathbb
R}^{\left(m-k\right)\times\left(n-k\right)}$. Let
$p\left(k,n\right)$ be a fixed non-negative function bounded by a
low degree polynomial in $k$ and $n$. A RRQR algorithm tries to
permute the columns of $A$ such that
\[
{\sigma }_k\left(R_{11}\right)\ge \frac{{\sigma
}_k\left(A\right)}{p(k,n)},\ \ \ {\sigma }_1\left(R_{22}\right)\le
{\sigma}_{k+1}\left(A\right)\cdot p(k,n) .
\]
\end{definition}
An overview on this topic is given in \cite{StrongRRQR}.

The relation between $A$ and $R$ can shed some light on the
rank-revealing properties of RRQR. Let $AE=\left[
\begin{array}{cc}A_1 & A_2 \end{array}\right]$ be a partitioning of
$AE$ such that $A_1$ contains the first $k$ columns. The RRQR
decomposition is rank-revealing in the sense that it tries to put a
set of $k$ maximally independent columns of $A$ into $A_1$. We
formalize this statement with Lemma \ref{lem:rrqr_accuracy}.

\begin{lemma}
\label{lem:rrqr_accuracy} Assume that the RRQR algorithm found a
pivoting of $A$ such that ${\sigma }_k\left(R_{11}\right)\ge
{{\sigma }_k\left(A\right)}/{\beta }$, where $\beta \geq 1$. If $A$
has numerical rank of at least $k$ with respect to the threshold
$\epsilon$, then, the numerical rank of $A_1$ (the first $k$ columns
of $AE$) is $k$ with respect to the threshold $\beta \cdot \epsilon
$.
\end{lemma}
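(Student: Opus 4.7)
The plan is to unpack the hypothesis and conclusion in terms of explicit ratios of singular values, and then chain two easy bounds: one coming from the QR structure and one from the fact that extracting columns cannot increase the top singular value.

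First I would translate the numerical-rank hypothesis on $A$. Saying $A$ has numerical rank at least $k$ with respect to $\epsilon$ means that the first index $j$ for which $\sigma_1(A)/\sigma_j(A) > \epsilon$ is at least $k+1$; in particular, for $j \le k$ the ratio stays below $\epsilon$, and it suffices to record the single inequality $\sigma_1(A)/\sigma_k(A) \le \epsilon$. The desired conclusion is analogous for $A_1$ with the threshold $\beta\epsilon$: since $A_1$ has only $k$ columns, $\sigma_{k+1}(A_1)=0$ automatically gives $\sigma_1(A_1)/\sigma_{k+1}(A_1)=\infty > \beta\epsilon$, so the only nontrivial thing to check is that $\sigma_1(A_1)/\sigma_j(A_1) \le \beta\epsilon$ for every $j \le k$. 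By monotonicity of singular values this reduces to the one estimate $\sigma_1(A_1)/\sigma_k(A_1) \le \beta\epsilon$.

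Next I would exploit the structure of the RRQR decomposition. Write $AE = QR$ and look at the first $k$ columns of both sides. Because $R$ is upper triangular with the block form in Eq. \eqref{eq:r_from_qr}, the first $k$ columns of $R$ are $\bigl[\begin{smallmatrix} R_{11} \\ 0 \end{smallmatrix}\bigr]$, so $A_1 = Q \bigl[\begin{smallmatrix} R_{11} \\ 0 \end{smallmatrix}\bigr]$. Orthogonality of $Q$ and the trailing zero block imply that $A_1$ and $R_{11}$ share their singular values, in particular $\sigma_k(A_1)=\sigma_k(R_{11})$. The RRQR hypothesis then yields $\sigma_k(A_1) \ge \sigma_k(A)/\beta$.

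For the numerator, I would use the standard column-submatrix bound: since $A_1$ consists of a subset of the columns of $AE$, for any unit $x$ the vector $A_1 x$ equals $(AE)\tilde x$ for a zero-padded unit vector $\tilde x$, hence $\sigma_1(A_1) \le \sigma_1(AE)=\sigma_1(A)$. Chaining these two bounds gives
\[
\frac{\sigma_1(A_1)}{\sigma_k(A_1)} \;\le\; \frac{\sigma_1(A)}{\sigma_k(A)/\beta} \;=\; \beta\cdot\frac{\sigma_1(A)}{\sigma_k(A)} \;\le\; \beta\epsilon,
\]
which is exactly what the conclusion requires. I do not anticipate any real obstacle here; the whole argument is a short composition of two elementary inequalities, and the only place one has to be a little careful is in verifying that the definition of numerical rank is satisfied at the boundary index $k+1$, which is handled for free by $A_1$ having only $k$ columns.
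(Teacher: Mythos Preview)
Your proof is correct and follows essentially the same approach as the paper: both identify $\sigma_k(A_1)=\sigma_k(R_{11})$ via the orthogonality of $Q$, invoke the RRQR lower bound $\sigma_k(A_1)\ge\sigma_k(A)/\beta$, bound $\sigma_1(A_1)\le\sigma_1(A)$, and chain these with the hypothesis $\sigma_1(A)/\sigma_k(A)\le\epsilon$. The only cosmetic difference is that the paper cites the interlacing property of singular values (Corollary~8.6.3 in \cite{MatrixComputations}) for the bound $\sigma_1(A_1)\le\sigma_1(A)$, whereas you give the equivalent direct zero-padding argument.
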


\begin{proof}
The RRQR algorithm yields $A_1=Q\left[ \begin{array}{cc}R_{11} & 0
\end{array}\right]^T $. Since $Q$ is orthogonal, it does not modify
singular values. Therefore, we have ${\sigma
}_k\left(A_1\right)={\sigma }_k\left[ \begin{array}{cc} R_{11} & 0
\end{array} \right]^T={\sigma }_k\left(R_{11}\right)$. By combining
the above with our assumption on the RRQR algorithm, we get
\begin{equation}\label{eq:qrguarantee}
\beta \cdot {\sigma }_k\left(A_1\right)\ge {\sigma }_k\left(A\right)
.
\end{equation}
The interlacing property of singular values (Corollary 8.6.3 in
\cite{MatrixComputations}) gives us
\begin{equation}\label{eq:singinterlace}
{\sigma }_1\left(A\right)\ge {\sigma }_1\left(A_1\right) .
\end{equation}
By employing definition \ref{def:numer_rank} for $A$ and
incorporating Eqs. \eqref{eq:qrguarantee} and
\eqref{eq:singinterlace}, we get
\[\epsilon\ge \frac{{\sigma }_1\left(A\right)}{{\sigma }_k(A)}\ge \frac{{\sigma }_1\left(A_1\right)}{{\sigma }_k(A)}\ge \frac{{\sigma }_1\left(A_1\right)}{\beta \cdot {\sigma }_k\left(A_1\right)} .\]
By rearranging terms, we get
\[\frac{{\sigma }_1\left(A_1\right)}{{\sigma }_k\left(A_1\right)}\le \beta \cdot \epsilon .\]
Therefore the numerical rank of $A_1$ is at least $k$ with respect
to the threshold $\beta \cdot \epsilon$. Since $A_1$ has only $k$
columns, it has precisely this rank.
\end{proof}

\subsection{ Algorithm Description and Rationale}

Initially, our algorithm decomposes the matrix $M$ into $G\cdot S$.
Then, a RRQR algorithm chooses the $s$ \emph{most} non-singular
columns of $G^T$ and $S$ and insert then into $G_A^T$ and $S_A$,
respectively. We use a variant of RRQR that measures non-singularity
according to the magnitude of the last singular value (see the proof
of Corollary \ref{cor:algo_rrqr_bound}). The non-singularity of
$G_A$ and $S_A$ will bound the non-singularity of $G_A S_A$ (see Eq.
\eqref{eq:matinv_approx}).

On a higher level observation, the algorithm will try to perform an
exhaustive search for the $s\times s$ most non-singular square in
$GS$. However, since $GS$ approximates $M$, choosing $A_M$ from the
same rows and columns of $M$ amounts to choosing one of its most
non-singular squares. These notions are formalized in Theorem
\ref{th:a_is_invertible}.

The magnitude of the last singular-value in $A_M$, denoted by
$\sigma_s\left(A_M\right)$, will be used as a measure for the
singularity of $A_M$. This quantity is instrumental in defining  the
bound of the approximation error given in Theorem
\ref{th:algo_approx_error}. We show in the experimental results
section (section \ref{sec:experiments}) that empirically,
$\sigma_s\left(A_M\right)$ is strongly related to the approximation
error of the Nystr\"{o}m procedure.

\subsection{ Analysis of Nystr\"{o}m Error }
\label{sec:nystrom_accuracy}

Let $M$ be a matrix with the decomposition given by Eq.
\eqref{eq:assym_m_decomposition}. This partitioning corresponds to
sampling $s$ columns and rows from $M$ to form the matrix $A_M$. Our
error analysis depends on an approximate decomposition of $M$ into a
product of two `thin' matrices. Let $M\simeq GS$ be a decomposition
of $M$ where $G\in {\mathbb R}^{m\times s}$ and $S\in {\mathbb
R}^{s\times n}$. The approximation error of $M$ by $GS$ is denoted
by $e_s$. Formally, $\vectornorm{M-GS}_2 \leq e_s$. Let $G=\left[
\begin{array}{cc}G_A & G_B \end{array}\right]^T$ be a row
partitioning of $G$ where $G_A\in {\mathbb R}^{s\times r}$ and
$G_B\in {\mathbb R}^{\left(m-s\right)\times r}$. Let
$S=[\begin{array}{cc}S_A & S_B \end{array}]$ be a column
partitioning of $S$ where $S_A\in {\mathbb R}^{r\times s},S_B\in
{\mathbb R}^{r\times \left(n-s\right)}$. This notation yields the
following forms for the sub-matrices of $M$:
\begin{equation}\label{eq:nyst_rank_s_decomposition}
A_M\simeq G_AS_A,\ \ B_M\simeq G_AS_B,\ \ F_M\simeq G_BS_A,\ \
C_M\simeq G_BS_B.
\end{equation} where $A_M$, $B_M$, $F_M$ and $C_M$ were defined in Eq. \ref{eq:assym_m_decomposition}.

\begin{lemma}\textbf{(based on Corollary 8.6.2 in \cite{MatrixComputations})}\label{lem:sing_val_diff}
If $A$ and $A+E$ are in $\mathbb{R}^{m\times n}$ then for $k\leq
min\left(m,n\right)$ we have $\left|\sigma_k\left(A+E\right) -
\sigma_k\left(A\right)\right|\leq\sigma_1\left(E\right) =
\vectornorm{E}_2$.
\end{lemma}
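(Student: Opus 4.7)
The plan is to derive the bound directly from the min--max (Courant--Fischer) characterization of singular values. For any $X \in \mathbb{R}^{m \times n}$ and $k \leq \min(m,n)$, this characterization states
$$\sigma_k(X) = \max_{\substack{S \subseteq \mathbb{R}^n \\ \dim S = k}} \; \min_{\substack{x \in S \\ \vectornorm{x}_2 = 1}} \vectornorm{Xx}_2,$$
which is essentially the content of Theorem 8.6.1 in \cite{MatrixComputations}. The perturbation statement we need is the standard corollary that follows from this single identity.

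For the upper bound, I would fix an arbitrary $k$-dimensional subspace $S \subseteq \mathbb{R}^n$ and apply the triangle inequality to every unit vector $x \in S$, giving $\vectornorm{(A+E)x}_2 \leq \vectornorm{Ax}_2 + \vectornorm{Ex}_2 \leq \vectornorm{Ax}_2 + \vectornorm{E}_2$. Taking the min over unit $x \in S$ on both sides preserves the inequality, and then taking the max over such $S$ yields $\sigma_k(A+E) \leq \sigma_k(A) + \vectornorm{E}_2$. For the matching direction, I would apply the same argument to the decomposition $A = (A+E) + (-E)$, using $\vectornorm{-E}_2 = \vectornorm{E}_2$, to obtain $\sigma_k(A) \leq \sigma_k(A+E) + \vectornorm{E}_2$. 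Combining the two one-sided inequalities produces the absolute-value bound, and the identification $\sigma_1(E) = \vectornorm{E}_2$ is simply the definition of the spectral norm.

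There is no real obstacle here: the entire argument is a mechanical application of the min--max principle, which is exactly why the lemma is attributed to Corollary 8.6.2 of \cite{MatrixComputations}. The only point worth checking is that the proof goes through unchanged when $m \neq n$, since the competing subspace $S$ lives in $\mathbb{R}^n$ and the restriction $k \leq \min(m,n)$ ensures that $\sigma_k$ is well-defined for both $A$ and $A+E$. If a more self-contained route were preferred, an equivalent proof can be obtained by applying Weyl's inequality for Hermitian eigenvalues to the symmetric dilation $\begin{pmatrix} 0 & A \\ A^T & 0 \end{pmatrix}$, whose nonzero eigenvalues are exactly $\pm\sigma_i(A)$; the perturbation $\begin{pmatrix} 0 & E \\ E^T & 0 \end{pmatrix}$ has spectral norm equal to $\vectornorm{E}_2$, so Weyl's bound translates directly into the desired inequality.
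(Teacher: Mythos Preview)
Your argument via the Courant--Fischer min--max characterization is correct and complete: the pointwise inequality $\vectornorm{(A+E)x}_2 \le \vectornorm{Ax}_2 + \vectornorm{E}_2$ survives taking $\min$ over unit $x\in S$ and then $\max$ over $k$-dimensional $S$, and the symmetric step with $A=(A+E)+(-E)$ gives the other direction. The remark that the restriction $k\le\min(m,n)$ is all that is needed, regardless of whether $m\ge n$ or $m<n$, is also correct.

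The paper, however, does not reprove the inequality at all. Its proof consists of two sentences: it invokes Corollary~8.6.2 of \cite{MatrixComputations} verbatim for the case $m\ge n$, and for $m<n$ it applies that same corollary to $A^T$ and $E^T$, noting that transposition preserves both singular values and the spectral norm. So the paper's contribution is only the transposition trick that removes the shape restriction present in the cited reference. Your route is genuinely different in that it is self-contained and handles all $(m,n)$ uniformly, with no case split; the paper's route is shorter on the page because it outsources the substance to Golub and Van Loan and only patches the asymmetry in the hypothesis. Either is perfectly adequate for how the lemma is used downstream (Theorem~\ref{th:a_is_invertible} and Lemma~\ref{lem:a_lg_invertible}).
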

\begin{proof}
Corollary 8.6.2 in \cite{MatrixComputations} states the same lemma
with the requirement $m\geq n$. If $m<n$, we can use the original
version of the lemma to get $\left|\sigma_k\left(A^T+E^T\right) -
\sigma_k\left(A^T\right)\right|\leq\vectornorm{E^T}_2$.
Transposition neither modifies the singular values nor the norm of a
matrix.
\end{proof}

\begin{theorem}\label{th:a_is_invertible}
Assuming that
\begin{enumerate}

\item \label{gs_nonsing} $\sigma_s\left(GS\right) > 0$;

(This means that $GS$ is of rank at least $s$. Otherwise, a
non-singular $A_M$ cannot be found)

\item \label{svd_assump} $\sigma_s\left(G\right) \sigma_s\left(S\right) = \sigma_s\left(GS\right) / \gamma$ for some constant $\gamma \geq 1 ;$

(It will allow us to use the non-singularity of $G_A$ and $S_A$ as a
bound for the non-singularity of $G_A S_A$. This demands the initial
decomposition to be reasonably well conditioned. See Corollary
\ref{cor:algo_svd_bound} for details)

\item \label{rrqr_assump} $\sigma_s(G_A) \geq \sigma_s(G) / \beta$ and $\sigma_s(S_A) \geq \sigma_s(S) / \beta$ for some constant  $\beta \geq 1;$

(This will allow us to use $\sigma_s\left(A_M\right)$ as a bound for
$\sigma_s\left(G S\right)$. The RRQR algorithm will fulfill this
assumption in its choice of $G_A$ and $S_A$)

\item \label{singval_assump} $e_s < \left(\sigma_s\left(M\right)-e_s\right) / \beta^2 \gamma$, where $e_s$ is the error given by the rank-$s$ approximation of $M$ by $GS$.

(The initial rank-$s$ approximation should be good enough)

\end{enumerate}
Then, $A_M$ is non-singular.
\end{theorem}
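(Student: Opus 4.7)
The plan is to bound $\sigma_s(A_M)$ from below by chaining a sequence of singular-value inequalities that terminate at $\sigma_s(M)$, and then invoke hypothesis \ref{singval_assump} to guarantee positivity. I would start from the observation that $A_M$ is the top-left $s\times s$ block of $M$ while $G_A S_A$ is the top-left block of $GS$ (see Eq. \eqref{eq:nyst_rank_s_decomposition}), so $A_M - G_A S_A$ is a submatrix of $M - GS$. Since the spectral norm of a submatrix is bounded by that of the full matrix, $\vectornorm{A_M - G_A S_A}_2 \le \vectornorm{M - GS}_2 \le e_s$. Applying Lemma \ref{lem:sing_val_diff} then gives $\sigma_s(A_M) \ge \sigma_s(G_A S_A) - e_s$.

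Next, I would invoke the standard inequality $\sigma_s(G_A S_A) \ge \sigma_s(G_A)\,\sigma_s(S_A)$ for square matrices, which follows from $\vectornorm{(G_A S_A)^{-1}}_2 \le \vectornorm{S_A^{-1}}_2\,\vectornorm{G_A^{-1}}_2$ whenever both factors are invertible. Combining with hypothesis \ref{rrqr_assump} gives $\sigma_s(G_A S_A) \ge \sigma_s(G)\,\sigma_s(S)/\beta^2$, and hypothesis \ref{svd_assump} converts the right-hand side into $\sigma_s(GS)/(\beta^2\gamma)$. A second use of Lemma \ref{lem:sing_val_diff}, this time comparing $M$ with $GS$, yields $\sigma_s(GS) \ge \sigma_s(M) - e_s$. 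Chaining these inequalities produces
\[
\sigma_s(A_M) \;\ge\; \frac{\sigma_s(M) - e_s}{\beta^2 \gamma} - e_s,
\]
and rearranging hypothesis \ref{singval_assump} shows this lower bound is strictly positive, proving $A_M$ non-singular.

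The step I expect to require the most care is the product inequality $\sigma_s(G_A S_A) \ge \sigma_s(G_A)\,\sigma_s(S_A)$: it holds cleanly only for square matrices, and only meaningfully when both factors are invertible. The square dimensions are forced by the partitioning, since $G \in \mathbb{R}^{m\times s}$ makes $G_A \in \mathbb{R}^{s\times s}$ and similarly for $S_A$. For non-singularity I would work backwards through the hypotheses: hypothesis \ref{gs_nonsing} together with hypothesis \ref{svd_assump} implies $\sigma_s(G)$ and $\sigma_s(S)$ are positive, and hypothesis \ref{rrqr_assump} propagates this to $\sigma_s(G_A)$ and $\sigma_s(S_A)$. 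I would therefore open the proof by verifying this chain of non-singularities before applying the inverse-norm bound, so that every term in the final inequality is well defined.
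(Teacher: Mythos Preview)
Your proposal is correct and follows essentially the same chain of inequalities as the paper: the submatrix norm bound $\vectornorm{A_M-G_AS_A}_2\le e_s$, the product inequality $\sigma_s(G_AS_A)\ge\sigma_s(G_A)\sigma_s(S_A)$ via inverse norms, assumptions \ref{svd_assump}--\ref{rrqr_assump}, and Lemma \ref{lem:sing_val_diff} applied to $M$ versus $GS$. The only difference is the closing step: the paper stops at $\vectornorm{A_M-G_AS_A}_2<\sigma_s(G_AS_A)$ and invokes the distance-to-singularity characterization $1/\kappa_2(A)=\min_{A+\Delta A\text{ singular}}\vectornorm{\Delta A}_2/\vectornorm{A}_2$, whereas you apply Lemma \ref{lem:sing_val_diff} once more to $A_M$ versus $G_AS_A$ and obtain the explicit lower bound $\sigma_s(A_M)\ge(\sigma_s(M)-e_s)/(\beta^2\gamma)-e_s$ directly. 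Your route is slightly more economical and in fact yields exactly the quantitative bound (Eq.~\eqref{eq:am_singval_lim}) that the paper derives separately in the proof of Lemma \ref{lem:a_lg_invertible}.
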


\begin{proof}
Lemma \ref{lem:sing_val_diff} yields $\abs{\sigma_s\left(M\right) -
\sigma_s\left(GS\right)} \leq \vectornorm{M - GS}_2 = e_s$, or
\begin{equation}\label{eq:gs_approx}
\sigma_s\left(M\right) - e_s \leq \sigma_s\left(GS\right) .
\end{equation}
From assumptions \ref{svd_assump} and \ref{rrqr_assump} we obtain
\begin{equation}\label{eq:gasa_approx}
\sigma_s\left(GS\right) / \beta^2 \gamma \leq \sigma_s\left(G\right)
\sigma_s\left(S\right) / \beta^2 \leq \sigma_s\left(G_A\right)
\sigma_s\left(S_A\right) .
\end{equation}
$G_A$ and $S_A$ are $s\times s$ matrices. Assumptions
\ref{gs_nonsing}, \ref{svd_assump} and \ref{rrqr_assump} show that
$\sigma_s\left(G_A\right)$ and $\sigma_s\left(G_A\right)$ are
non-zero. Thus, $G_A$ and $S_A$ are non-singular and we obtain
\begin{equation}\label{eq:matinv_approx}
\sigma_s\left(G_A\right) \sigma_s\left(S_A\right) =
\frac{1}{\vectornorm{G_A^{-1}}\vectornorm{S_A^{-1}}} \leq
\frac{1}{\vectornorm{S_A^{-1} G_A^{-1}}} =
\frac{1}{\vectornorm{\left(G_A S_A\right)^{-1}}} = \sigma_s\left(G_A
S_A\right) .
\end{equation}
By combining Eqs. \eqref{eq:gs_approx}, \eqref{eq:gasa_approx} and
\eqref{eq:matinv_approx} we get
\begin{equation}\label{eq:nonsing_left_side}
\left(\sigma_s\left(M\right)-e_s\right) / \beta^2 \gamma \leq
\sigma_s\left(G_A S_A\right) .
\end{equation}
$A_M$ and $G_A S_A$ are the top left $s\times s$ corners of $M$ and
$GS$, respectively. Hence, we can write $\vectornorm{A_M - G_A
S_A}_2 \leq \vectornorm{M - GS }_2 = e_s$. By combining this
expression with Eq. \eqref{eq:nonsing_left_side} and using
assumption \ref{singval_assump}, we have $\vectornorm{A_M - G_A
S_A}_2 \leq \sigma_s\left(G_A S_A\right)$. Equivalently,
\begin{equation}\label{eq:a_m_non_singular}
\frac{\vectornorm{A_M - G_A S_A}_2}{\vectornorm{G_A S_A}_2} <
\frac{1}{\kappa\left(G_A S_A\right)} .
\end{equation}
The matrix $G_A S_A$ is non-singular since it is the product of the
non-singular matrices $G_A$ and $S_A$. Equation 2.7.6 in
\cite{MatrixComputations} states that for any matrix $A$ and
perturbation matrix $\Delta A$ we have
\[\frac{1}{\kappa_2\left(A\right)} = \min_{A+\Delta A \text{ singular}} \frac{\vectornorm{\Delta A}_2}{\vectornorm{A}_2} .\]
This equation in effect gauges the minimal $L_2$ distance from $A$
to a singular matrix. By setting $G_A S_A = A$ in Eq.
\eqref{eq:a_m_non_singular} we conclude that $A_M$ is non-singular.
\end{proof}

Assumption \ref{svd_assump} can be verified for different types of
rank-$s$ approximations of $M$. For the approximated SVD we have
Corollary \ref{cor:algo_svd_bound}.
\begin{corollary}\label{cor:algo_svd_bound}
When the approximated SVD is used to form $GS$, we have $\gamma = 1$
(where $\gamma$ is defined by assumption \ref{svd_assump} in Theorem
\ref{th:a_is_invertible}).
\end{corollary}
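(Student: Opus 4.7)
The plan is to verify assumption \ref{svd_assump} of Theorem \ref{th:a_is_invertible} directly from the structure of a truncated SVD factorization. When the rank-$s$ SVD approximation of $M$ is written as $U \Lambda V^T$ with $U \in \mathbb{R}^{m \times s}$ and $V \in \mathbb{R}^{n \times s}$ having orthonormal columns and $\Lambda \in \mathbb{R}^{s \times s}$ diagonal with the top $s$ singular values of $M$, I take $G = U$ and $S = \Lambda V^T$ (as suggested in the discussion following the ``thin'' decomposition definition).

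First I would compute $\sigma_s(G) = 1$, using the fact that $U^T U = I_s$ forces all singular values of $U$ to equal $1$. Next, since $V^T V = I_s$, the matrix $S = \Lambda V^T$ satisfies $S S^T = \Lambda V^T V \Lambda^T = \Lambda^2$, so the non-zero singular values of $S$ are precisely the absolute diagonal entries of $\Lambda$; in particular $\sigma_s(S) = \sigma_s(\Lambda)$.

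For the product $GS = U \Lambda V^T$, I would compute $(GS)(GS)^T = U \Lambda V^T V \Lambda^T U^T = U \Lambda^2 U^T$. Since $U$ has orthonormal columns, it extends to a full unitary matrix in $\mathbb{R}^{m \times m}$, and the non-zero eigenvalues of $U \Lambda^2 U^T$ coincide with those of $\Lambda^2$. Hence the non-zero singular values of $GS$ are exactly the diagonal entries of $\Lambda$, and $\sigma_s(GS) = \sigma_s(\Lambda)$. Combining the three identities yields $\sigma_s(G)\, \sigma_s(S) = 1 \cdot \sigma_s(\Lambda) = \sigma_s(GS)$, which matches the form required by assumption \ref{svd_assump} with $\gamma = 1$.

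The argument is essentially a bookkeeping exercise relying on the unitary invariance of singular values under multiplication by matrices with orthonormal columns or rows. I do not expect a significant obstacle, but if one wished to be fully rigorous about the ``approximate'' SVD, one would note that the algorithms cited for producing such a decomposition (e.g.\ \cite{LinearTimeSvd,SVDDeshpande,SVDHarPeled,SVDSarlos}) all return factors of precisely this orthonormal form, so the identity $\gamma = 1$ holds exactly even though the underlying approximation $M \simeq GS$ has error $e_s$.
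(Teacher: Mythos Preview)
Your argument is correct and follows essentially the same route as the paper: both proofs exploit that orthonormal factors preserve singular values, so the product $\sigma_s(G)\sigma_s(S)$ collapses to $\sigma_s(GS)$. The only cosmetic difference is that the paper absorbs the diagonal matrix into the left factor, taking $G = U\Sigma$ and $S = V^T$, whereas you absorb it into the right factor with $G = U$ and $S = \Lambda V^T$; the computation is symmetric and either choice works.
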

\begin{proof}
Let $M\simeq U\Sigma V^T$ be the approximated SVD of $M$. We can
choose $G = U\Sigma$ and $S=V^T$. From the properties of the SVD, we
have $\sigma_s\left(G\right) = \sigma_s\left(U\Sigma\right) =
\Sigma_{ss} = \sigma_s\left(GS\right)$ and $\sigma_s\left(S\right) =
1$. It follows that $\sigma_s\left(G\right) \sigma_s\left(S \right)
= \sigma_s\left(GS\right)$.
\end{proof}

Similarly, the $\beta$ in assumption \ref{rrqr_assump} depends on
the algorithm that is used to pick $G_A$ and $S_A$ from within $G$
and $S$, respectively. When a state-of-the-art RRQR algorithm is
used, we derive Corollary \ref{cor:algo_rrqr_bound}.
\begin{corollary}\label{cor:algo_rrqr_bound}
When the RRQR version given in Algorithm 1 in \cite{AlgoRRQR} is
used to choose $G_A$ and $S_A$, we have $\beta \leq
\sqrt{s\left(min\left(m,n\right)-s\right) + 1}$ , where $\beta$ is
defined by assumption \ref{rrqr_assump} in Theorem
\ref{th:a_is_invertible}.
\end{corollary}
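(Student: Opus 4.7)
The plan is to obtain the bound by invoking the worst-case guarantee for Algorithm 1 of \cite{AlgoRRQR} separately on each of the two factors. Recall that the strong RRQR algorithm, when applied to any matrix $A$ with $q$ columns and asked to pivot the leading $k$ columns, produces a permutation $E$ such that the $k\times k$ leading block $R_{11}$ of the resulting upper-triangular factor satisfies
\[
\sigma_k\left(R_{11}\right) \;\ge\; \frac{\sigma_k\left(A\right)}{\sqrt{1+k(q-k)}}.
\]
Moreover, as in the derivation leading to Lemma \ref{lem:rrqr_accuracy}, $\sigma_k(R_{11})$ is exactly $\sigma_k$ of the submatrix formed by the first $k$ columns of $AE$.

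I would apply this guarantee twice. First, to $G^T \in \mathbb{R}^{s\times m}$ with $k=s$, selecting the $s$ columns of $G^T$ (equivalently, the $s$ rows of $G$) that compose $G_A^T$. Since $G^T$ has $m$ columns, this yields
\[
\sigma_s\left(G_A\right) \;=\; \sigma_s\left(G_A^T\right) \;\ge\; \frac{\sigma_s\left(G^T\right)}{\sqrt{1+s(m-s)}} \;=\; \frac{\sigma_s\left(G\right)}{\sqrt{1+s(m-s)}}.
\]
Second, to $S \in \mathbb{R}^{s\times n}$ with $k=s$, selecting the $s$ columns of $S$ that compose $S_A$. Since $S$ has $n$ columns, one obtains
\[
\sigma_s\left(S_A\right) \;\ge\; \frac{\sigma_s\left(S\right)}{\sqrt{1+s(n-s)}}.
\]

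Taking $\beta$ to be the larger of the two denominators simultaneously satisfies both inequalities in assumption \ref{rrqr_assump}. Since the two denominators differ only by whether $m-s$ or $n-s$ appears, the bound on $\beta$ in terms of the relevant matrix dimension follows, giving the stated estimate $\beta \le \sqrt{s(\min(m,n)-s)+1}$ for the tighter of the two factors (with the other factor contributing the analogous estimate in the other dimension).

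The main technical point is simply to align the conventions between our decomposition $M\simeq GS$ and the statement of Algorithm 1 in \cite{AlgoRRQR}: one must verify that column pivoting of $G^T$ (as opposed to row pivoting of $G$) does produce the desired selection of rows, and that the polynomial bound $p(k,q)=\sqrt{1+k(q-k)}$ is indeed the one achieved when the tuning parameter $f$ of that algorithm is set to $1$. No new estimation is required beyond these citations — the corollary is essentially a bookkeeping consequence of a known RRQR guarantee applied to each factor independently.
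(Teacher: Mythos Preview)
Your approach is essentially the same as the paper's: apply the RRQR guarantee from \cite{AlgoRRQR} separately to the two factors $G$ and $S$ (via $G^T$ and $S$, equivalently rows of $G$ and $S^T$), obtain $\sigma_s(G_A)\ge \sigma_s(G)/\sqrt{s(m-s)+1}$ and $\sigma_s(S_A)\ge \sigma_s(S)/\sqrt{s(n-s)+1}$, and read off $\beta$. The only cosmetic difference is that the paper phrases the cited result as Lemma~3.5 of \cite{AlgoRRQR} on local $\mu$-maximum volume (with $\mu$ taken essentially equal to $1$), whereas you invoke the equivalent RRQR bound directly and speak of a tuning parameter $f$; be careful that the parameter in Pan's paper \cite{AlgoRRQR} is $\mu$, not the $f$ of Gu--Eisenstat \cite{StrongRRQR}.
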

\begin{proof}
Let $A\in\mathbb{R}^{n\times k}$  be a matrix where $k\leq n$ and a
let $A = \left[\begin{array}{cc} A_1 & A_2\end{array}\right]$ be a
partition of $A$ where $A_1\in\mathbb{R}^{k\times k}$. The concept
of local $\mu$-maximum volume was used in \cite{AlgoRRQR} to find a
pivoting scheme such that $\sigma_{min}\left(A_1\right)$ is bounded
from below. Formally, Lemma 3.5 in \cite{AlgoRRQR} states that when
$A_1$ is a local $\mu$-maximum volume in $A$, we have
$\sigma_{min}\left(A_1\right) \geq \sigma_k\left(A\right) /
\sqrt{k\left(n-k\right)\mu^2 + 1}$. $\mu$ is a user-controlled
parameter that has negligible effect in this bound. For instance,
\cite{AlgoRRQR} suggests setting $\mu = 1 + \mathbf{u}$, where
$\mathbf{u}$ is the machine precision. Therefore, we omit $\mu$ in
subsequent references of this bound.

Algorithm 1 in \cite{AlgoRRQR} describes how a local $\mu$-maximum
volume can be found for a given matrix $A$. This algorithm can be
applied to the choice of $G_A$ and $S_A^T$ from the rows of $G$ and
$S^T$, respectively. It follows from Lemma 3.5 in \cite{AlgoRRQR}
that $\sigma_s\left(G_A\right) \geq \sigma_s\left(G\right) /
\sqrt{s\left(m-s\right) + 1}$ and $\sigma_s\left(S_A\right) =
\sigma_s\left(S_A^T\right) \geq \sigma_s\left(S^T\right) /
\sqrt{s\left(n-s\right) + 1} = \sigma_s\left(S\right) /
\sqrt{s\left(n-s\right) + 1}$. The definition of $\beta$ yields the
required expression.
\end{proof}
Later the RRQR algorithm will be used to select $G_A^T$ and $S_A$ as
columns from $G^T$ and $S$, respectively. This is equivalent to
choosing rows from  $G$ and $S^T$. The latter form was used for
compatibility with the notation of \cite{AlgoRRQR}.

Theorem \ref{th:a_is_invertible} states that if our rank-$s$
approximation of $M$ is sufficiently accurate and our RRQR algorithm
managed to pick $s$ non-singular columns from $G^T$ and $S$, then
our sample matrix $A_M$ is non-singular.

We bring a few definitions in order to bound the error of the
Nystr\"{o}m approximation procedure. We will decompose the matrix
$M$ into a sum of two matrices: $M_{lg}$ that contains the energy of
the top $s$ singular values and $M_{sm}$ that contains the residual.
If $M_{lg}$ and $M_{sm}$ are given in SVD outer product form, then
we have $M_{lg} = \sum_{i=1}^s \sigma_i u_i v_i$ and $M_{sm} =
\sum_{i=s+1}^{min(m,n)} \sigma_i u_i v_i$, respectively. Based on
this decomposition, we define the following decompositions of
$M_{lg}$ and $M_{sm}$:
\begin{equation}\label{eq:m_lg_sm_parts}
M = M_{lg} + M_{sm} = \left[ \begin{array}{cc}
            A_M & B_M \\
            F_M & C_M
        \end{array} \right]
= \left[ \begin{array}{cc}
            A_{lg} & B_{lg} \\
            F_{lg} & C_{lg}
        \end{array} \right]
+ \left[ \begin{array}{cc}
            A_{sm} & B_{sm} \\
            F_{sm} & C_{sm}
        \end{array} \right] .
\end{equation}
\begin{lemma}\label{lem:a_lg_invertible}
If all the assumptions of Theorem \ref{th:a_is_invertible} hold and
if we have
\begin{equation}\label{eq:m_s1_singval_bound}
\sigma_{s+1}\left(M\right) < \frac{\sigma_s\left(M\right) -
e_s}{\beta^2 \gamma} - e_s
\end{equation}
(where $e_s$ is defined by assumption \ref{singval_assump} in
Theorem \ref{th:a_is_invertible}), then $A_{lg}$ is non-singular.
\end{lemma}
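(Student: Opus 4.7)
The plan is to chain two perturbation arguments, in the spirit of Theorem \ref{th:a_is_invertible}, using Lemma \ref{lem:sing_val_diff} each time. I will first lower bound $\sigma_s(A_M)$, then pass from $A_M$ to $A_{lg}$ by treating $A_{sm}$ as the perturbation, and finally use the hypothesis on $\sigma_{s+1}(M)$ to conclude that $\sigma_s(A_{lg})$ is strictly positive.

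First I would recall that in the proof of Theorem \ref{th:a_is_invertible} it was already established that
\[
\sigma_s(G_A S_A) \;\ge\; \frac{\sigma_s(M)-e_s}{\beta^2\gamma},
\qquad \|A_M - G_A S_A\|_2 \;\le\; e_s,
\]
the latter because $A_M$ and $G_A S_A$ are the top-left $s\times s$ blocks of $M$ and $GS$. Applying Lemma \ref{lem:sing_val_diff} to $A_M = (G_A S_A) + (A_M - G_A S_A)$ therefore yields
\[
\sigma_s(A_M) \;\ge\; \sigma_s(G_A S_A) - \|A_M - G_A S_A\|_2 \;\ge\; \frac{\sigma_s(M)-e_s}{\beta^2\gamma} - e_s.
\]

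Next I would move from $A_M$ to $A_{lg}$ via the identity $A_{lg} = A_M - A_{sm}$ obtained from Eq. \eqref{eq:m_lg_sm_parts}. Since $A_{sm}$ is a submatrix of $M_{sm}$, the spectral norm cannot increase under extraction of a sub-block, so $\|A_{sm}\|_2 \le \|M_{sm}\|_2 = \sigma_{s+1}(M)$. (This is the one place I would have to pause on — but it follows immediately from the variational characterization of the spectral norm applied after zero-padding, and is essentially the same interlacing-type observation already used implicitly for $A_M$ and $GS$.) Applying Lemma \ref{lem:sing_val_diff} once more gives
\[
\sigma_s(A_{lg}) \;\ge\; \sigma_s(A_M) - \|A_{sm}\|_2 \;\ge\; \frac{\sigma_s(M)-e_s}{\beta^2\gamma} - e_s - \sigma_{s+1}(M).
\]

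Finally I would invoke the lemma's hypothesis \eqref{eq:m_s1_singval_bound}, which is exactly the statement that the right-hand side above is strictly positive. Hence $\sigma_s(A_{lg}) > 0$, and since $A_{lg}$ is an $s\times s$ matrix, it is non-singular, completing the proof. The main conceptual obstacle is really just bookkeeping: getting the right two-step perturbation — first from $G_A S_A$ to $A_M$ (handled by $e_s$) and then from $A_M$ to $A_{lg}$ (handled by $\sigma_{s+1}(M)$) — and noticing that the hypothesis has been tailored precisely to absorb both errors.
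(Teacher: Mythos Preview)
Your proposal is correct and follows essentially the same two-step perturbation argument as the paper: first bound $\sigma_s(A_M)$ from below via Eq.~\eqref{eq:nonsing_left_side} and Lemma~\ref{lem:sing_val_diff}, then control $\|A_M-A_{lg}\|_2$ by $\sigma_{s+1}(M)$. The only cosmetic difference is in the final step: the paper rewrites $\|A_M-A_{lg}\|_2<\sigma_s(A_M)$ as $\|A_M-A_{lg}\|_2/\|A_M\|_2<1/\kappa(A_M)$ and invokes the distance-to-singularity characterization (as in Theorem~\ref{th:a_is_invertible}), whereas you apply Lemma~\ref{lem:sing_val_diff} once more to get $\sigma_s(A_{lg})>0$ directly --- these are equivalent, and your version is arguably a touch cleaner.
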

\begin{proof}
We employ Lemma \ref{lem:sing_val_diff} to bound
$\abs{\sigma_s\left(A_M\right) - \sigma_s\left(G_A S_A\right)}$.
Formally, we have
\[
\abs{\sigma_s\left(A_M\right) - \sigma_s\left(G_A S_A\right)} \leq
\vectornorm{A_M - G_A S_A}_2 \leq \vectornorm{M - G_S}_2 = e_s .
\]
By rearranging terms, we obtain $\sigma_s\left(G_A S_A\right) - e_s
\leq \sigma_s\left(A_M\right)$. Combining this expression with Eq.
\eqref{eq:nonsing_left_side} from the proof of Theorem
\ref{th:a_is_invertible} yields
\begin{equation}\label{eq:am_singval_lim}
\frac{\sigma_s\left(M\right)-e_s}{\beta^2 \gamma} - e_s \leq
\sigma_s\left(A_M\right) .
\end{equation}
The quantity $\vectornorm{A_M - A_{lg}}_2$ can be bounded by
$\vectornorm{A_M - A_{lg}}_2 \leq  \vectornorm{M - M_{lg}}_2 =
\sigma_{s+1}\left(M\right)$. Combining the above with Eqs.
\eqref{eq:m_s1_singval_bound} and \eqref{eq:am_singval_lim} yields
\[
\vectornorm{A_M - A_{lg}}_2 \leq \sigma_{s+1}\left(M\right) <
\frac{\sigma_s\left(M\right)-e_s}{\beta^2 \gamma} - e_s \leq
\sigma_s\left(A_M\right) .
\]
The terms are rearranged to get
\begin{equation}\label{eq:a_lg_non_singular}
\vectornorm{A_M - A_{lg}}_2 / \vectornorm{A_M}_2 < 1 /
\kappa\left(A_M\right) ,
\end{equation}
where $\kappa$ is the standard $L_2$-norm condition number. This
expression is similar to Eq. \eqref{eq:a_m_non_singular} in the
proof of Theorem \ref{th:a_is_invertible}. As before, if $A_M$ is
non-singular, then Eq. \eqref{eq:a_lg_non_singular} implies that
$A_{lg}$ is non-singular.
\end{proof}

We define the rank-$s$ approximation of $M$ that is based on the
truncated SVD form of $M_{lg}$. Let $M_{lg} = U_s \Sigma_s V_s^T$ be
the truncated SVD of $M$. Denote $X=U_s \Sigma_s$ and $Y=V_s^T$ such
that $M_{lg} = XY$. We define $X = \left[ \begin{array}{cc} X_A &
X_B \end{array} \right]^T$ and $Y = \left[ \begin{array}{cc} Y_A &
Y_B \end{array} \right]$ where $X_A, Y_A \in {\mathbb R}^{s\times
s}$. We get the following forms for the components of $M_{lg}$:
$A_{lg}= X_A Y_A,\ \ B_{lg}= X_AY_B,\ \ F_{lg}= X_BY_A$ and $C_{lg}
= X_BY_B$.

The Nystr\"{o}m approximation error can now be formulated.
\begin{lemma}\label{lem:span_approx_error}
Assume that $A_M$ and $A_{lg}$ are non-singular. Then, the error of
the Nystr\"{o}m approximation procedure is bounded by
\begin{equation}\label{eq:span_approx_error}
\frac{\sigma_{s+1}\left(M\right)}{\sigma_{s}\left(A_M\right)} \left(
\frac{\sigma_1\left(M\right)^2}{\sigma_s\left(A_{lg}\right)} +
2\sigma_1\left(M\right) + \sigma_{s+1}\left(M\right) \right) .
\end{equation}
\end{lemma}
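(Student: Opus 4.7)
The plan is to reduce the operator-norm error of the whole Nystr\"om matrix to a single block, and then expand that block around the ``large'' part $M_{lg}$, where the Nystr\"om formula is \emph{exact}. Since Eq.~\eqref{eq:assym_matrix_nys_form} shows that $\hat M$ agrees with $M$ on the blocks $A_M,B_M,F_M$, the error matrix is supported only on the bottom-right block, so
\[
\|M-\hat M\|_2 \;=\; \|C_M - F_M A_M^{-1} B_M\|_2.
\]
Here I use $A_M^{-1}$ instead of $A_M^+$ because the invertibility of $A_M$ is granted by Theorem~\ref{th:a_is_invertible}, and the invertibility of $A_{lg}$ by Lemma~\ref{lem:a_lg_invertible}. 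The key algebraic observation, which drives the whole proof, is that the factorisation $M_{lg}=XY$ with $A_{lg}=X_AY_A$ gives
\[
F_{lg}\,A_{lg}^{-1}\,B_{lg} \;=\; X_B Y_A (X_A Y_A)^{-1} X_A Y_B \;=\; X_B Y_B \;=\; C_{lg},
\]
so the Nystr\"om formula recovers $M_{lg}$ exactly from its own blocks.

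Next I would substitute $A_M=A_{lg}+A_{sm}$, $B_M=B_{lg}+B_{sm}$, $F_M=F_{lg}+F_{sm}$, $C_M=C_{lg}+C_{sm}$ into $F_MA_M^{-1}B_M-C_M$, and use the standard resolvent identity
\[
A_M^{-1} \;=\; A_{lg}^{-1} \;-\; A_{lg}^{-1}\,A_{sm}\,A_M^{-1},
\]
to turn the $F_{lg}A_M^{-1}B_{lg}-C_{lg}$ piece into $-\,F_{lg}A_{lg}^{-1}A_{sm}A_M^{-1}B_{lg}$. After expansion, the error splits into five perturbation terms: one ``doubly small'' term $F_{lg}A_{lg}^{-1}A_{sm}A_M^{-1}B_{lg}$, two cross terms $F_{lg}A_M^{-1}B_{sm}$ and $F_{sm}A_M^{-1}B_{lg}$, the quadratic-small term $F_{sm}A_M^{-1}B_{sm}$, and finally $-C_{sm}$. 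Each carries either one or two factors that are a submatrix of $M_{sm}$, and hence is small.

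Bounding each term in operator norm is now mechanical: every submatrix of $M$ is at most $\|M\|_2=\sigma_1(M)$, every submatrix of $M_{sm}$ is at most $\sigma_{s+1}(M)$, and $\|A_M^{-1}\|_2=1/\sigma_s(A_M)$, $\|A_{lg}^{-1}\|_2=1/\sigma_s(A_{lg})$. Substituting gives the three summands of the claimed bound, with the common prefactor $\sigma_{s+1}(M)/\sigma_s(A_M)$ appearing naturally once the $A_M^{-1}$ from each term is factored out together with the one $M_{sm}$-submatrix contributing $\sigma_{s+1}(M)$. The $\|C_{sm}\|_2$ contribution is absorbed by noting that $\sigma_s(A_M)\le\sigma_1(M)$, which turns $\sigma_{s+1}(M)$ into $\sigma_{s+1}(M)\sigma_1(M)/\sigma_s(A_M)$ and folds it into the $2\sigma_1(M)$ summand.

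The main obstacle, as I see it, is not any single estimate but the careful bookkeeping of the five perturbation terms so that they collapse precisely into the three summands of \eqref{eq:span_approx_error}. In particular, the choice of which resolvent identity to apply to which copy of $A_M^{-1}$ determines whether the leading term comes out as $\sigma_1(M)^2\sigma_{s+1}(M)/(\sigma_s(A_M)\sigma_s(A_{lg}))$ or as something weaker; applying it once, to the $F_{lg}A_M^{-1}B_{lg}-C_{lg}$ piece only, is what produces the single $1/\sigma_s(A_{lg})$ factor in the claimed bound and leaves all remaining pieces depending only on $1/\sigma_s(A_M)$.
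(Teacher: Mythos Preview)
Your approach is exactly the paper's: reduce to the $C$-block, expand $F_MA_M^{-1}B_M$ bilinearly in the $lg/sm$ splitting, apply the resolvent identity $A_M^{-1}=A_{lg}^{-1}-A_{lg}^{-1}A_{sm}A_M^{-1}$ to the $F_{lg}A_M^{-1}B_{lg}$ piece, use $F_{lg}A_{lg}^{-1}B_{lg}=C_{lg}$, and bound the remaining terms with $\|A_M^{-1}\|=1/\sigma_s(A_M)$, $\|A_{lg}^{-1}\|=1/\sigma_s(A_{lg})$, submatrix norms of $M_{sm}$ by $\sigma_{s+1}(M)$ and of $M_{lg}$ by $\sigma_1(M)$.

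The one discrepancy is your fifth term $-C_{sm}$. The paper's proof in fact collects only \emph{four} residual terms,
\[
E_{nys}=-F_{lg}A_{lg}^{-1}A_{sm}A_M^{-1}B_{lg}+F_{lg}A_M^{-1}B_{sm}+F_{sm}A_M^{-1}B_{lg}+F_{sm}A_M^{-1}B_{sm},
\]
and these four give precisely the displayed bound with the coefficient $2\sigma_1(M)$. In other words, the paper is tacitly bounding $\|F_MA_M^{-1}B_M-C_{lg}\|_2$, the deviation of the Nystr\"om block from the best rank-$s$ block $C_{lg}$, rather than $\|F_MA_M^{-1}B_M-C_M\|_2$. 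Your absorption argument for $C_{sm}$ via $\sigma_s(A_M)\le\sigma_1(M)$ is valid as an inequality, but it adds one more copy of $\sigma_1(M)\,\sigma_{s+1}(M)/\sigma_s(A_M)$ and would turn the middle summand into $3\sigma_1(M)$, not $2\sigma_1(M)$; it does not ``fold into'' the existing $2\sigma_1(M)$ term. So either read the lemma as the paper's proof does (error relative to $C_{lg}$, four terms, constant $2$), or keep your five terms and accept the constant $3$ for the genuine $\|M-\hat M\|_2$ bound.
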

\begin{proof}
As seen from Eq. \eqref{eq:m_hat_general_form}, the matrices $A_M,
B_M$ and $F_M$ are not modified by the Nystr\"{o}m extension. $C_M$
is approximated as $F_M A_M^{+} B$. Assuming that $A$ is
non-singular, then $F_M A_M^{+} B$ is equivalent to $F_M A_M^{-1}
B$. The latter can be decomposed using the partitioning in Eq.
\eqref{eq:m_lg_sm_parts}:
\begin{equation}\label{eq:error_major_div}
\begin{array}{lc}
F_M A_M^{-1} B = \left(F_{lg} + F_{sm}\right)A_M^{-1} \left(B_{lg} + B_{sm}\right) = \\
 = F_{lg} A^{-1} B_{lg} + F_{lg} A^{-1} B_{sm} + F_{sm} A^{-1} B_{lg} + F_{sm} A^{-1} B_{sm} .
\end{array}
\end{equation}
Since $A_M$ and $A_{lg}$ are non-singular, we have $A_M^{-1} -
A_{lg}^{-1} = -A_{lg}^{-1}\left(A - A_{lg}\right)A_M^{-1} =
-A_{lg}^{-1} A_{sm} A_M^{-1}$. The first term of Eq.
\eqref{eq:error_major_div} can be written as
\begin{equation}\label{eq:error_minor_div}
F_{lg} A^{-1} B_{lg} = F_{lg} \left(A_{lg}^{-1} - A_{lg}^{-1} A_{sm}
A_M^{-1}\right) B_{lg} = F_{lg} A_{lg}^{-1} B_{lg} - F_{lg}
A_{lg}^{-1} A_{sm} A_M^{-1} B_{lg} .
\end{equation}
By our assumption, the matrices $X_A$ and $Y_A$ are non-singular
since $A_{lg} = X_A Y_A$ is non-singular. The first term of Eq.
\eqref{eq:error_minor_div} becomes:
\[
F_{lg} A_{lg}^{-1} B_{lg} = X_B Y_A \left(X_A Y_A\right)^{-1} X_A
Y_B = X_B Y_A Y_A^{-1} X_A^{-1} X_A Y_B = X_B Y_B = C_{lg} .
\]
This means that $F_{lg} A_{lg}^{-1} B_{lg}$ is the best rank-$s$
approximation to $C_M$, as given by the truncated SVD of $M$. We can
bound the error by collecting all the other terms in Eqs.
\eqref{eq:error_major_div} and \eqref{eq:error_minor_div}:
\[
E_{nys} = - F_{lg} A_{lg}^{-1} A_{sm} A_M^{-1} B_{lg} + F_{lg}
A^{-1} B_{sm} + F_{sm} A^{-1} B_{lg} + F_{sm} A^{-1} B_{sm} .
\]
By the definition of $M_{sm}$ in Eq. \eqref{eq:m_lg_sm_parts}, we
have $\vectornorm{M_{sm}}_2 \leq \sigma_{s+1}\left(M\right)$.
Therefore, we can bound
$\vectornorm{A_{sm}}_2,\vectornorm{B_{sm}}_2$ and
$\vectornorm{F_{sm}}_2$ by $\sigma_{s+1}\left(M\right)$. Similarly,
$\vectornorm{B_{lg}}_2$ and $\vectornorm{F_{lg}}_2$ are bounded by
$\sigma_1\left(M\right)$. The overall bound on
$\vectornorm{E_{nys}}_2$ is
\[
\begin{array}{c}
\vectornorm{E_{nys}}_2 = \vectornorm{- F_{lg} A_{lg}^{-1} A_{sm} A_M^{-1} B_{lg} + F_{lg} A^{-1} B_{sm} + F_{sm} A^{-1} B_{lg} + F_{sm} A^{-1} B_{sm}}_2 \leq \\ \vectornorm{F_{lg} A_{lg}^{-1} A_{sm} A_M^{-1} B_{lg} }_2 + \vectornorm{F_{lg} A^{-1} B_{sm} }_2 + \vectornorm{ F_{sm} A^{-1} B_{lg} }_2 + \vectornorm{F_{sm} A^{-1} B_{sm}}_2 \leq \\
\frac{\sigma_1\left(M\right)^2
\sigma_{s+1}\left(M\right)}{\sigma_s\left(A_M\right)\sigma_s\left(A_{lg}\right)}
+
\frac{\sigma_1\left(M\right)\sigma_{s+1}\left(M\right)}{\sigma_s\left(A_M\right)}
+
\frac{\sigma_1\left(M\right)\sigma_{s+1}\left(M\right)}{\sigma_s\left(A_M\right)}
+
\frac{\sigma_{s+1}\left(M\right)^2}{\sigma_s\left(A_M\right)} = \\
\frac{\sigma_{s+1}\left(M\right)}{\sigma_{s}\left(A_M\right)} \left(
\frac{\sigma_1\left(M\right)^2}{\sigma_s\left(A_{lg}\right)} +
2\sigma_1\left(M\right) + \sigma_{s+1}\left(M\right) \right) .
\end{array}
\]
\end{proof}

Corollary \ref{cor:rank_s_matrix_approx} is derived
straightforwardly:
\begin{corollary}\label{cor:rank_s_matrix_approx}
If $A_M$ is non-singular and the matrix $M$ is rank-$s$, then, the
Nystr\"{o}m extension approximates $M$ perfectly.
\end{corollary}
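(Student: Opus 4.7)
The plan is to derive this corollary as a direct specialization of Lemma \ref{lem:span_approx_error}. The only pieces I need to assemble are (i) verifying that the hypotheses of that lemma apply, and (ii) observing that the bound collapses to zero in the rank-$s$ case.

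First, I will unpack what rank-$s$ means in the decomposition $M = M_{lg} + M_{sm}$ from Eq. \eqref{eq:m_lg_sm_parts}. Since $\mathrm{rank}(M) \leq s$, the tail singular values vanish: $\sigma_{s+1}(M) = \sigma_{s+2}(M) = \cdots = 0$. Consequently $M_{sm} = \sum_{i=s+1}^{\min(m,n)} \sigma_i u_i v_i^T = 0$, so $M_{lg} = M$. In particular, matching the top-left $s \times s$ blocks gives $A_{lg} = A_M$. Since $A_M$ is non-singular by hypothesis, $A_{lg}$ is non-singular as well, so both preconditions of Lemma \ref{lem:span_approx_error} are satisfied.

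Next, I plug into the error bound \eqref{eq:span_approx_error}. The leading factor $\sigma_{s+1}(M)/\sigma_s(A_M)$ has numerator zero and denominator strictly positive (since $A_M$ is non-singular), so it equals $0$; the parenthesized factor is finite because $\sigma_s(A_{lg}) = \sigma_s(A_M) > 0$. Hence the entire bound on $\vectornorm{E_{nys}}_2$ is $0$. Since $A_M, B_M, F_M$ are preserved exactly by the Nyström form \eqref{eq:m_hat_general_form} and the only source of error is in the $C_M$ block, $\vectornorm{E_{nys}}_2 = 0$ gives $\hat{M} = M$ exactly.

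There is really no obstacle here; the corollary is a one-line consequence once we notice that rank-$s$ forces $\sigma_{s+1}(M) = 0$ and collapses $M_{lg}$ to $M$ itself. The only subtlety worth spelling out is the reduction $A_{lg} = A_M$, which is needed to invoke Lemma \ref{lem:span_approx_error} legitimately; without it one could worry that $\sigma_s(A_{lg})$ in the denominator might be zero and make the bound indeterminate.
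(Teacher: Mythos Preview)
Your proof is correct and follows essentially the same route as the paper: observe that rank-$s$ forces $M_{sm}=0$ so $A_{lg}=A_M$, verify the hypotheses of Lemma~\ref{lem:span_approx_error}, and then set $\sigma_{s+1}(M)=0$ in the bound \eqref{eq:span_approx_error}. Your added remark that $\sigma_s(A_{lg})=\sigma_s(A_M)>0$ keeps the parenthesized factor finite is a welcome clarification the paper leaves implicit.
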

\begin{proof}
If $M$ is rank-$s$ then $A_{lg} = A_M$ and the conditions in Lemma
\ref{lem:span_approx_error} hold. We obtain the result by setting
$\sigma_{s+1}\left(M\right) = 0$ in Eq.
\eqref{eq:span_approx_error}.
\end{proof}

We proceed to express the Nystr\"{o}m approximation error in
relation to the parameters $\beta, \gamma$ and $e_s$, as defined by
the assumptions in Theorem \ref{th:a_is_invertible}.
\begin{theorem}\label{th:algo_approx_error}
Assume that the assumptions of Theorem \ref{th:a_is_invertible} hold
as well as the assumptions of Lemmas \ref{lem:a_lg_invertible} and
\ref{lem:span_approx_error}. The error term of the Nystr\"{o}m
procedure is bounded by:
\begin{equation}\label{eq:algo_span_approx_error}
\frac{\sigma_{s+1}\left(M\right)\beta^2\gamma
}{\sigma_{s}\left(M\right) - \left(1 + \beta^2\gamma\right)e_s}
\left( \frac{\sigma_1\left(M\right)^2
\beta^2\gamma}{\sigma_s\left(M\right) - \left(1 +
\beta^2\gamma\right)e_s - \sigma_{s+1}\left(M\right)\beta^2\gamma} +
2\sigma_1\left(M\right) + \sigma_{s+1}\left(M\right) \right) .
\end{equation}
\end{theorem}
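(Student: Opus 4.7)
The plan is to start from the bound in Lemma \ref{lem:span_approx_error},
\[
\vectornorm{E_{nys}}_2 \leq \frac{\sigma_{s+1}(M)}{\sigma_{s}(A_M)} \left( \frac{\sigma_1(M)^2}{\sigma_s(A_{lg})} + 2\sigma_1(M) + \sigma_{s+1}(M) \right),
\]
and then replace the two ``unobservable'' quantities $\sigma_s(A_M)$ and $\sigma_s(A_{lg})$ by lower bounds expressed in terms of $\sigma_s(M)$, $\sigma_{s+1}(M)$, $\beta$, $\gamma$ and $e_s$. Since the bound is monotonically decreasing in each of these two singular values, any lower bound on them produces a valid upper bound on $\vectornorm{E_{nys}}_2$.

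For $\sigma_s(A_M)$, I would quote Eq.~\eqref{eq:am_singval_lim} already established in the proof of Lemma \ref{lem:a_lg_invertible}, namely
\[
\sigma_s(A_M) \geq \frac{\sigma_s(M)-e_s}{\beta^2\gamma} - e_s = \frac{\sigma_s(M) - (1+\beta^2\gamma)e_s}{\beta^2\gamma},
\]
which is positive thanks to assumption \ref{singval_assump} of Theorem \ref{th:a_is_invertible}. This gives the factor $\tfrac{\sigma_{s+1}(M)\beta^2\gamma}{\sigma_s(M)-(1+\beta^2\gamma)e_s}$ in front.

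For $\sigma_s(A_{lg})$, I would apply Lemma \ref{lem:sing_val_diff} to the pair $A_M, A_{lg}$, obtaining
\[
\sigma_s(A_{lg}) \geq \sigma_s(A_M) - \vectornorm{A_M - A_{lg}}_2 \geq \sigma_s(A_M) - \sigma_{s+1}(M),
\]
since $\vectornorm{A_M-A_{lg}}_2 \le \vectornorm{M-M_{lg}}_2 = \sigma_{s+1}(M)$. Chaining this with the previous inequality yields
\[
\sigma_s(A_{lg}) \geq \frac{\sigma_s(M) - (1+\beta^2\gamma)e_s - \sigma_{s+1}(M)\beta^2\gamma}{\beta^2\gamma},
\]
and the numerator is positive exactly by the hypothesis \eqref{eq:m_s1_singval_bound} of Lemma \ref{lem:a_lg_invertible}, which is what guarantees $A_{lg}$ is invertible. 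Substituting these two lower bounds into the bound from Lemma \ref{lem:span_approx_error} yields the stated expression \eqref{eq:algo_span_approx_error} after rearrangement.

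The only subtle step is making sure the denominators produced by inversion stay strictly positive; this is not really a calculation obstacle but an accounting one, and both positivity statements are exactly the content of assumption \ref{singval_assump} in Theorem \ref{th:a_is_invertible} and of the hypothesis \eqref{eq:m_s1_singval_bound} in Lemma \ref{lem:a_lg_invertible}. Thus the proof reduces to invoking the two earlier lemmas and substituting, with no additional analytic work required.
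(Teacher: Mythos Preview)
Your proposal is correct and follows essentially the same approach as the paper: you invoke Eq.~\eqref{eq:am_singval_lim} to lower-bound $\sigma_s(A_M)$, apply Lemma~\ref{lem:sing_val_diff} together with $\vectornorm{A_M-A_{lg}}_2\le\sigma_{s+1}(M)$ to lower-bound $\sigma_s(A_{lg})$, and substitute both into the bound of Lemma~\ref{lem:span_approx_error}. Your added remarks on monotonicity and positivity of the denominators are accurate and make the argument slightly more explicit than the paper's version.
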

\begin{proof}
We use Lemma \ref{lem:sing_val_diff} to obtain:
\[
\abs{\sigma_s\left(A_M\right) - \sigma_s\left(A_{lg}\right)} \leq
\vectornorm{A_M - A_{lg}}_2 \leq \vectornorm{M - M_{lg}}_2 =
\sigma_{s+1}\left(M\right) .
\]
Equivalently, $\sigma_s\left(A_M\right) - \sigma_{s+1}\left(M\right)
\leq \sigma_s\left(A_{lg}\right)$. We substitute
$\sigma_s\left(A_M\right)$ with the left side of Eq.
\eqref{eq:am_singval_lim} to get
\begin{equation}\label{eq:lim_a_gl}
\frac{\sigma_s\left(M\right)-e_s}{\beta^2 \gamma} - e_s -
\sigma_{s+1}\left(M\right)\leq \sigma_s\left(A_{lg}\right) .
\end{equation}
The result follows when the expressions for
$\sigma_s\left(A_M\right)$ and $\sigma_s\left(A_{lg}\right)$ in Eq.
\eqref{eq:span_approx_error} are replaced with the left sides of
Eqs. \eqref{eq:am_singval_lim} and \eqref{eq:lim_a_gl},
respectively.
\end{proof}

When $A_M$ is non-singular, the eigengap in the $s^{th}$ singular
value governs the approximation error. This can be seen from Eq.
\eqref{eq:algo_span_approx_error}, where the eigengap appears in the
expression $\frac{\sigma_{s+1}\left(M\right)\beta^2\gamma
}{\sigma_{s}\left(M\right) - \left(1 + \beta^2\gamma\right)e_s}$.
Theorem \ref{th:algo_approx_error} bounds the general case.
Corollary \ref{cor:rank_s_matrix_approx} shows what happens in the
limit case when the eigengap is infinite.

\section{ Sample Selection Algorithm }
\label{sec:algorithm} Our algorithm is based on Theorem
\ref{th:a_is_invertible} and Corollaries \ref{cor:algo_svd_bound}
and \ref{cor:algo_rrqr_bound}. It receives as its input a matrix
$M\in {\mathbb R}^{m\times n}$ and a parameter $s$ that determines
the sample size. It returns $A_M$ - a ``good'' sub-sample of $M$. If
the algorithm succeeds, we can use Theorem
\ref{th:algo_approx_error} to bound the approximation error. The
algorithm is described in Algorithm 1.

\begin{algorithm}[H]
\caption{$\left(M,s\right)$}

\begin{enumerate}
\item  \label{approx_mat_decomp_step}Form a rank-$s$ decomposition of $M$. Formally $M\simeq GS$, where $G\in {\mathbb R}^{m\times s}$ and $S\in {\mathbb R}^{s\times n}$.

\item  \label{first_rrqr_step}Apply the RRQR algorithm to $G^T$ to find a column pivoting matrix $E_G$ such that $\left[ \begin{array}{cc} G_A^T & G_B^T\end{array} \right] = G^T E_G = Q_G R_G$, where $G_A\in {\mathbb R}^{s\times s}$ and $G_B\in {\mathbb R}^{s\times m-s}$. Let $I_s$ be the group of indices in $M$ that correspond to the first $s$ columns of $E_G$.

\item  \label{second_rrqr_step}Apply the RRQR algorithm to $S$ to find a column pivoting matrix $E_S$ such that $\left[ \begin{array}{cc} S_A & S_B\end{array} \right] = S E_S = Q_S R_S$, where $S_A\in {\mathbb R}^{s\times s}$ and $S_B\in {\mathbb R}^{s\times n-s}$. Let $J_s$ be the group of indices in $M$ that correspond to the first $s$ columns of $E_S$.

\item \begin{algorithmic}
        \IF{$rank\left( G_A \right) \neq s$ or $rank\left( S_A \right) \neq s$ }
        \STATE return ``Algorithm failed. Please pick a different value for $s$.''
        \ENDIF
      \end{algorithmic}

\item  Form the matrix $A_M\in {\mathbb R}^{s\times s}$ such that $A_M = \left[ M_{ij} \right]_{i\in I_s, j\in J_s}$. Returns $A_M$ as the sub-sample matrix.
\end{enumerate}
\end{algorithm}

\subsection{ Algorithm Complexity Analysis }

Step \ref{approx_mat_decomp_step} is the computational bottleneck of
the algorithm and can take up to
$O\left(min\left(mn^2,nm^2\right)\right)$ operations if full SVD is
used. Approximate SVD algorithms are typically faster. For example,
the algorithm in \cite{SVDHarPeled} runs in $O\left(mn\right)$ time,
which is linear in the number of elements in the matrix. If we have
some prior knowledge about the structure of the matrix, it can take
even less time. For example, if an approximation of the norms of the
columns is known, we can use $LinearTimeSvd$ \cite{LinearTimeSvd} to
achieve a sub-linear runtime complexity of $O\left(s^2 m +
s^3\right)$. We denote the runtime complexity of this step by
$T_{approx}$. Using the $RRQR$ algorithm in \cite{StrongRRQR}, steps
\ref{first_rrqr_step} and \ref{second_rrqr_step} in Algorithm 1 take
$O(ms^2)$ and $O(ns^2)$ operations, respectively. Finally, the
formation of $A_M$ takes $O(s^2)$ time. The total runtime complexity
becomes $O\left(T_{approx} + \left(m+n\right)s^2\right)$ and it is
usually dominated by $O\left(T_{approx}\right)$.

Denote the space requirements of step \ref{approx_mat_decomp_step}
in Algorithm 1 by $S_{approx}$. Then, the total space complexity
becomes $O\left(S_{approx} +s\left(m+n\right)\right)$. Typically, a
total of $O\left(\left(m+n\right)s^{O(1)}\right)$ space is used.

\subsection{ Relation to ICD }

Let $M$ be decomposed into $M=X^T X$ where $X\in{\mathbb R}^{n\times
n}$. In this case, the $R$ factor in the QR decomposition of $X$ is
the Cholesky factor of $M$ since $X=QR$ means that $M=X^T X= R^T Q^T
Q R = R^T R$. Similarly, the Cholesky decomposition of a
symmetrically pivoted $M$ corresponds to a column pivoted QR of $X$.
The pivoting strategy used by the Cholesky algorithm in the ICD
algorithm is the greedy scheme of the classical pivoted-QR algorithm
in \cite{BusingerQr}. Applying ICD to $M$ gives the $R$ factor of
the pivoted QR on $X$, and vice versa. The special structure of the
matrix enables the ICD to unite steps
\ref{approx_mat_decomp_step},\ref{first_rrqr_step} and
\ref{second_rrqr_step} in Algorithm 1, creating a rank-$s$
approximation to $M$ while at the same time choosing pivots
according to a greedy QR criterion. This allows the ICD to achieve a
runtime complexity of $O\left(s^2 n\right)$.

\section{ Experimental Results }
\label{sec:experiments} In our experiments, we employ a fast but
inaccurate sub-linear SVD approximation for step
\ref{approx_mat_decomp_step} in Algorithm 1. This approximated SVD
first randomly samples the columns of the matrix. Then, it uses
these columns in the $LinearTimeSVD$ algorithm of
\cite{LinearTimeSvd} to compute an SVD approximation in $O\left(s^2
m + s^3\right)$ operations. For this SVD algorithm, the total
runtime complexity of Algorithm 1 is $O\left(s^2 \left(m+n\right) +
s^3\right)$ which is dominated by $O\left(s^2\left(m+n\right)\right)$.

\subsection{ Kernel Matrices }
\label{sec:kernel_mat_exps} First, we compare between the
performance of Algorithm 1 and the state-of-the-art sample selection
algorithms for kernel matrices. We construct a kernel matrix for a
given dataset, then each algorithm is used to choose a fixed sized
sample. From the notation of Eqs. \eqref{eq:assym_m_decomposition}
and \eqref{eq:assym_matrix_nys_form}, the error is displayed as
$\vectornorm{\hat{M}-M}$.

The following algorithms were compared: 1. The ICD algorithm
presented in section \ref{sec:gram_matrices_related_work}; 2. The
\emph{k}-means based algorithm presented in section
\ref{sec:gram_matrices_related_work}; 3. Random choice of sub-sample
as given in \cite{Chung}; 4. $LinearTimeSVD$ of
\cite{LinearTimeSvd}; 5. Algorithm 1; 6. SVD. The SVD algorithm is
used as a benchmark, since it provides rank-$s$ approximation with
the lowest Frobenius norm error. The empirical gain of our procedure
can be measured by the difference between the approximation errors
of $LinearTimeSVD$ and Algorithm 1, since $LinearTimeSVD$ is used in
Step  \ref{approx_mat_decomp_step} of Algorithm 1.

We use a Gaussian kernel of the form $k(x,y) =
exp\left(-\vectornorm{x-y}^2 / \epsilon\right)$ where $\epsilon$ is
the average squared distance between data points and the means of
each dataset. Results for methods which contain probabilistic
components are presented as the averages over 20 trials. These
include methods 2, 3, 4 and 5. The sample size is gradually
increased from 1\% to 10\% of the total data and the error is
measured in terms of the Frobenius norm.  The benchmark datasets,
summarized in Table \ref{summary}, were taken from the LIBSVM
archive \cite{DataSets}. The overall experimental parameters were
chosen to allow for comparison with Fig. 1 in \cite{SampleKmeans}.

The results are presented in Fig. \ref{fig:kernel_approx_errors}.
Algorithm 1 generally outperforms the  random sample selection
algorithm, particularly on datasets with fast spectrum decay such as
\textit{german.numer, segment} and \textit{svmguide1a}. In these datasets,
our algorithm approaches and sometimes even surpasses the
state-of-the-art \emph{k}-means based algorithm of
\cite{SampleKmeans}. This fits our derivation  for the approximation
error given by Theorem \ref{th:algo_approx_error}.

It should be noted that the algorithm in [26] has a runtime complexity
of $O\left(sn\right)$ compared to our $O\left(s^2n\right)$ for this setting.
This difference has no real-world consequences when $s$ is very small or
even constant, as typical for these problems.

In some cases, Algorithm 1 actually performs worse than
$LinearTimeSVD$. We use a greedy RRQR algorithm which sometimes
does not properly sort the singular-vectors according to their importance
(namely, the absolute value of the singular-value). This can happen
for instance when the spectrum decays slowly, which means leading singular
values are close in magnitude. In Algorithm 1, we always choose the
top $s$ indices as found by the RRQR algorithm, so we might get things wrong.

\begin{table}[H]
    \begin{center}
      \begin{tabular}{l cccccccc}
        \hline
        \small{dataset}      & \textbf{german.numer} & \textbf{splice} & \textbf{adult1a} & \textbf{dna}  & \textbf{segment} & \textbf{w1a}  & \textbf{svmgd1a} & \textbf{satimage} \\
        \small{sample count} & 1000   & 1000   & 1605    & 2000 & 2310    & 2477 & 3089    & 4435 \\
        \small{dimension}    & 24     & 60     & 123     & 180  & 19      & 300  & 4       & 36   \\
        \hline
      \end{tabular}
    \end{center}
    \vspace{-15pt}
    \caption{Summary of benchmark datasets (taken from \cite{DataSets}) }
    \label{summary}
\end{table}

\begin{figure}[h]
    \begin{center}
\begin{tabular}{ccc}
\includegraphics[width=0.31\columnwidth]{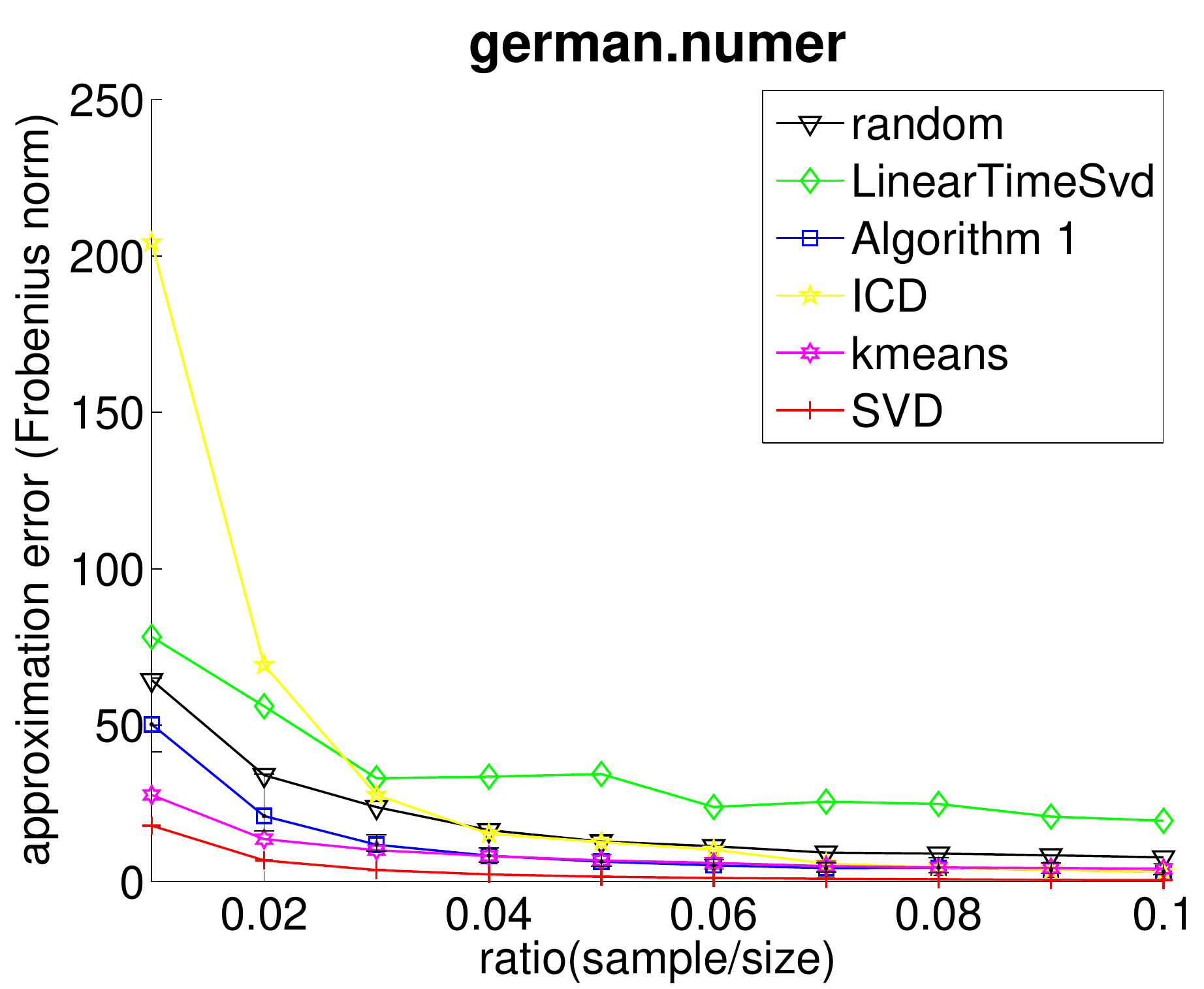} & 
\includegraphics[width=0.31\columnwidth]{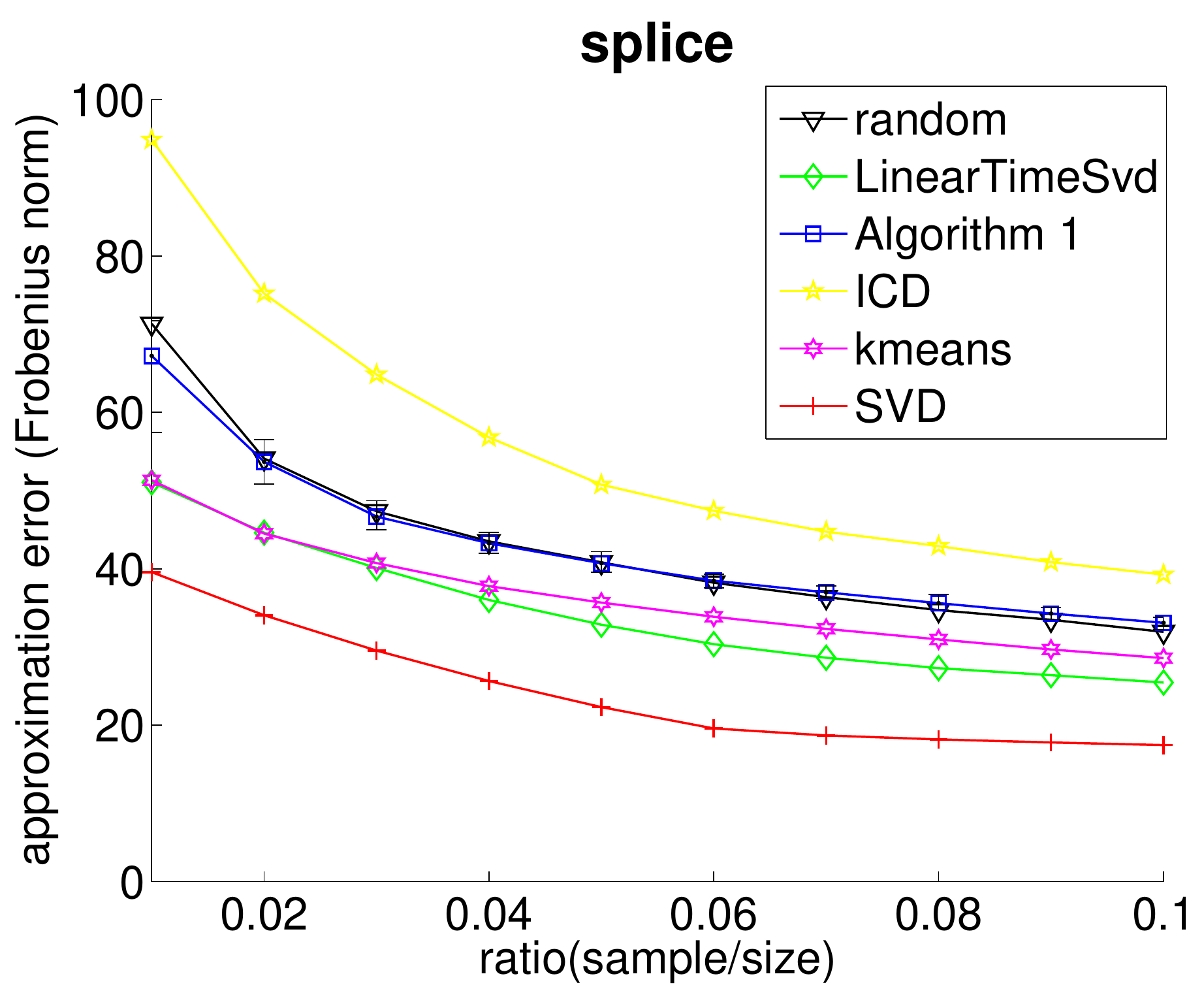} &
\includegraphics[width=0.31\columnwidth]{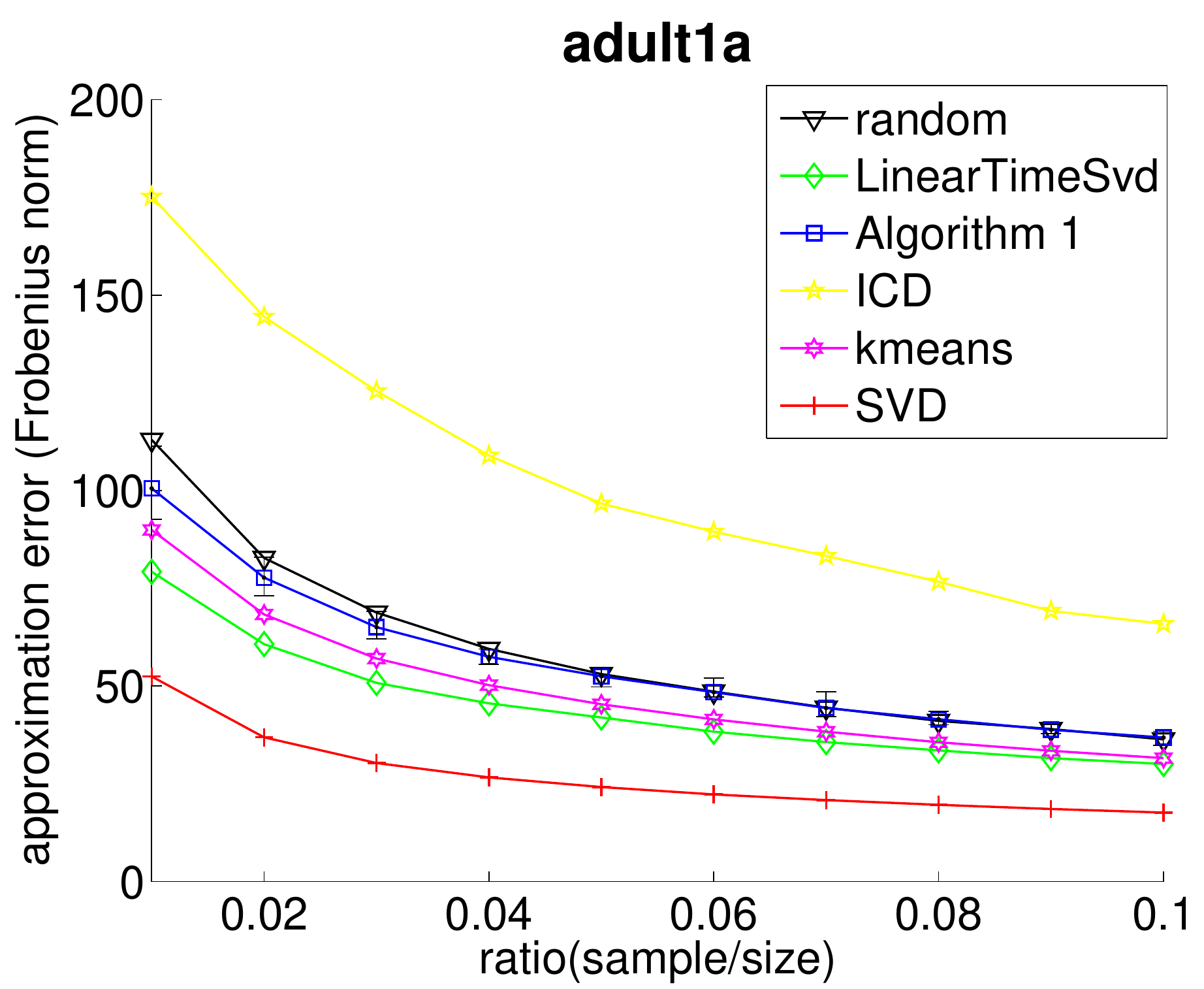}\tabularnewline \includegraphics[width=0.31\columnwidth]{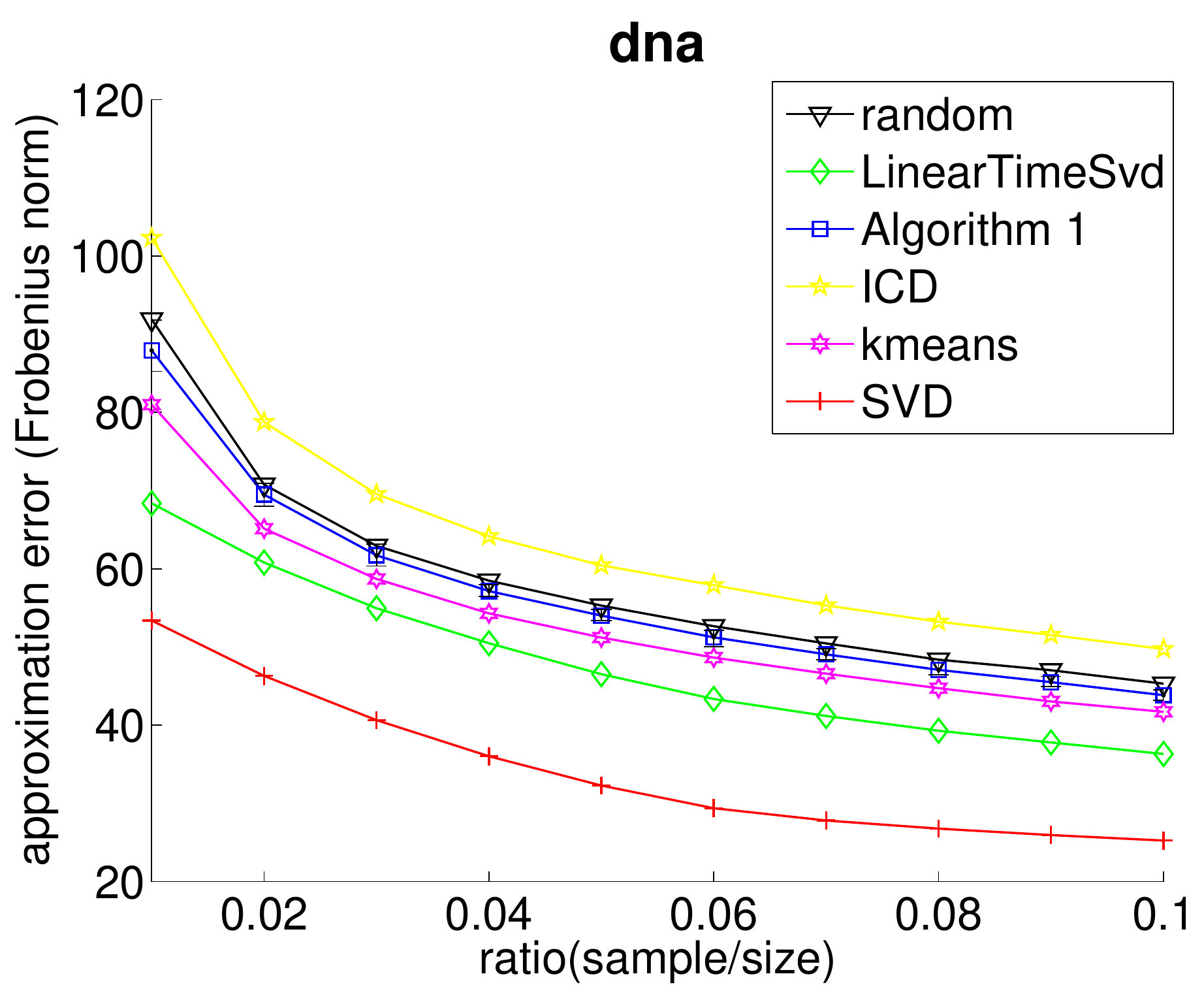}&
\includegraphics[width=0.31\columnwidth]{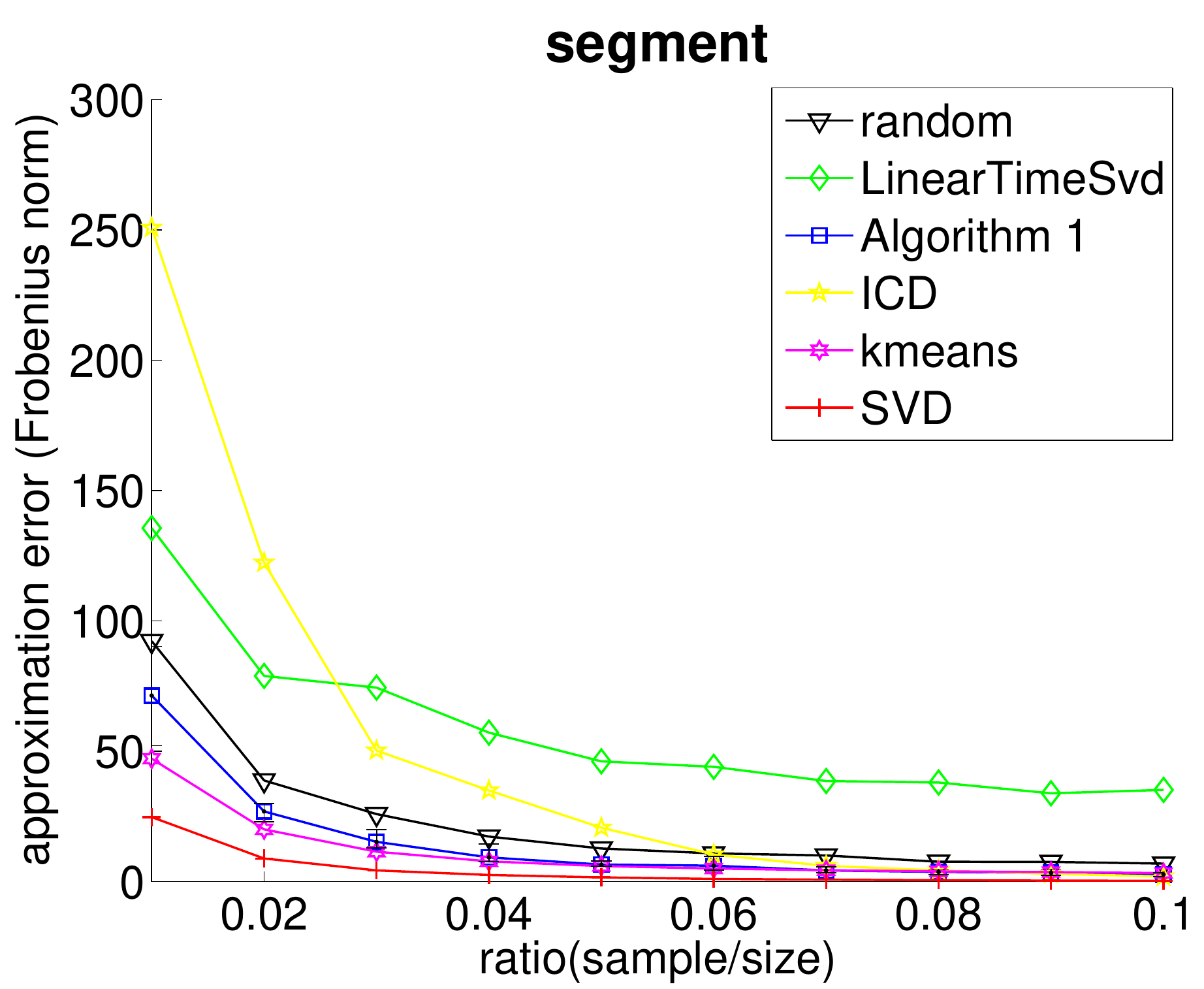} & 
\includegraphics[width=0.31\columnwidth]{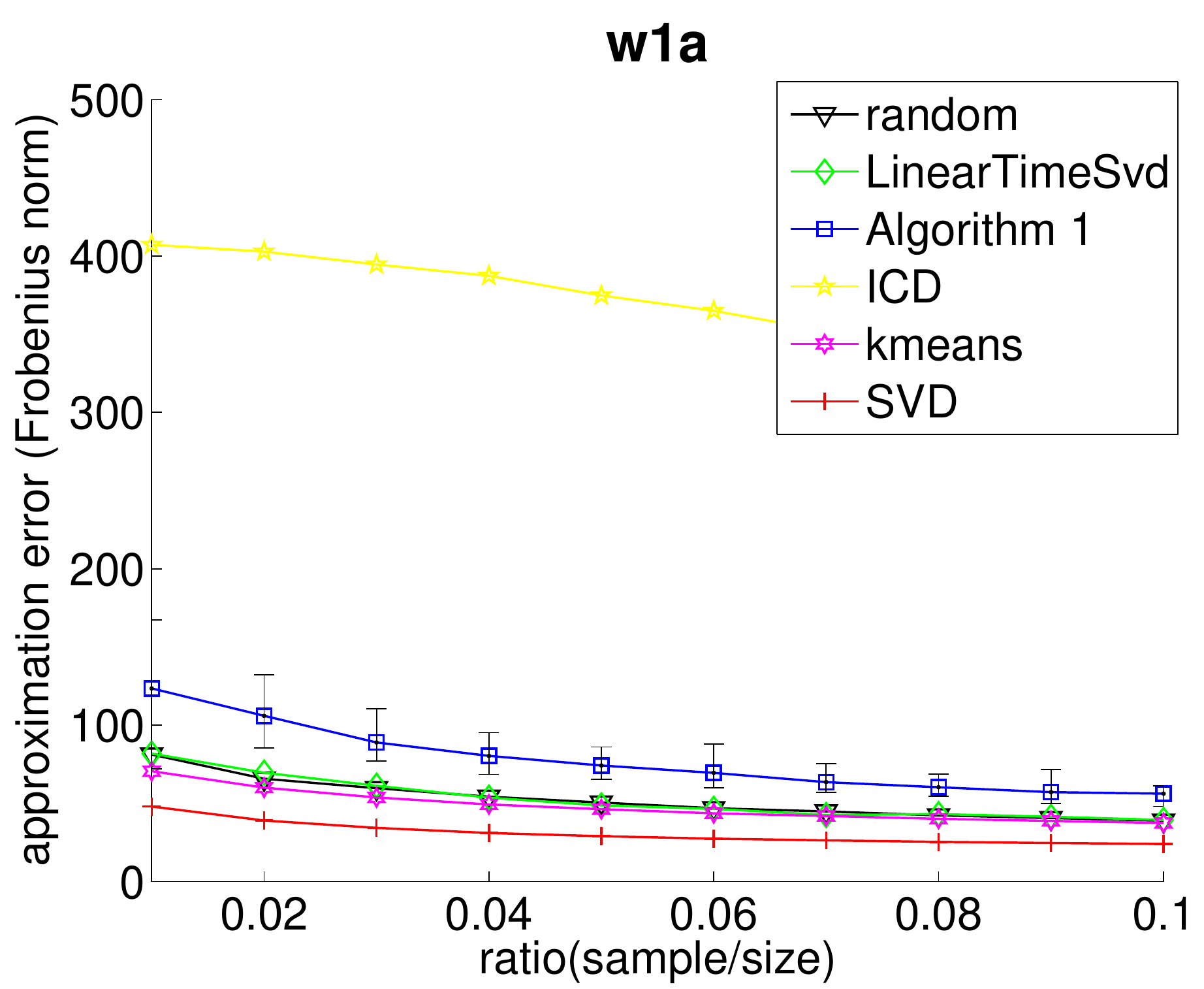}\tabularnewline
\end{tabular}
\begin{tabular}{cc}
\includegraphics[width=0.31\columnwidth]{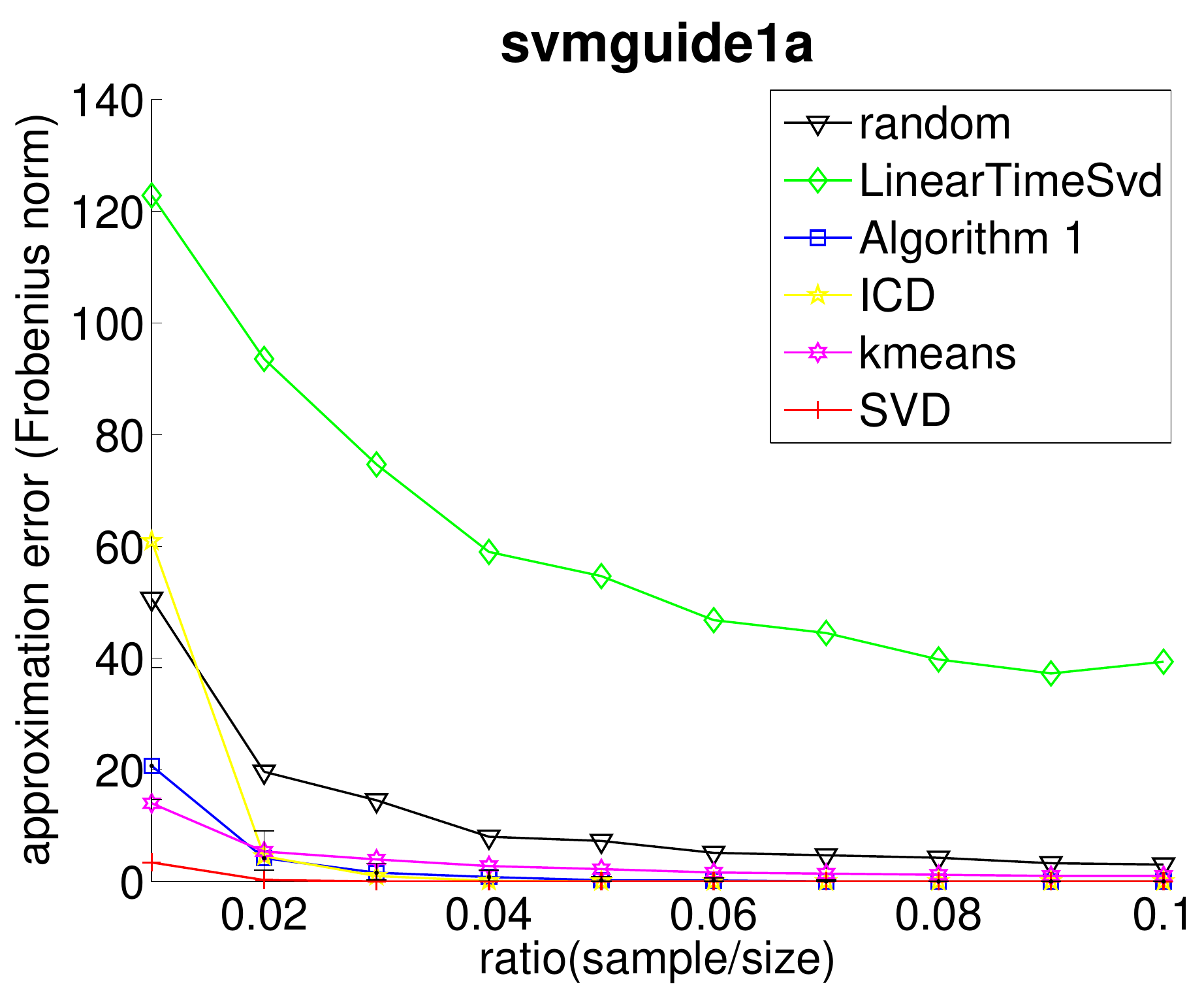} & 
\includegraphics[width=0.31\columnwidth]{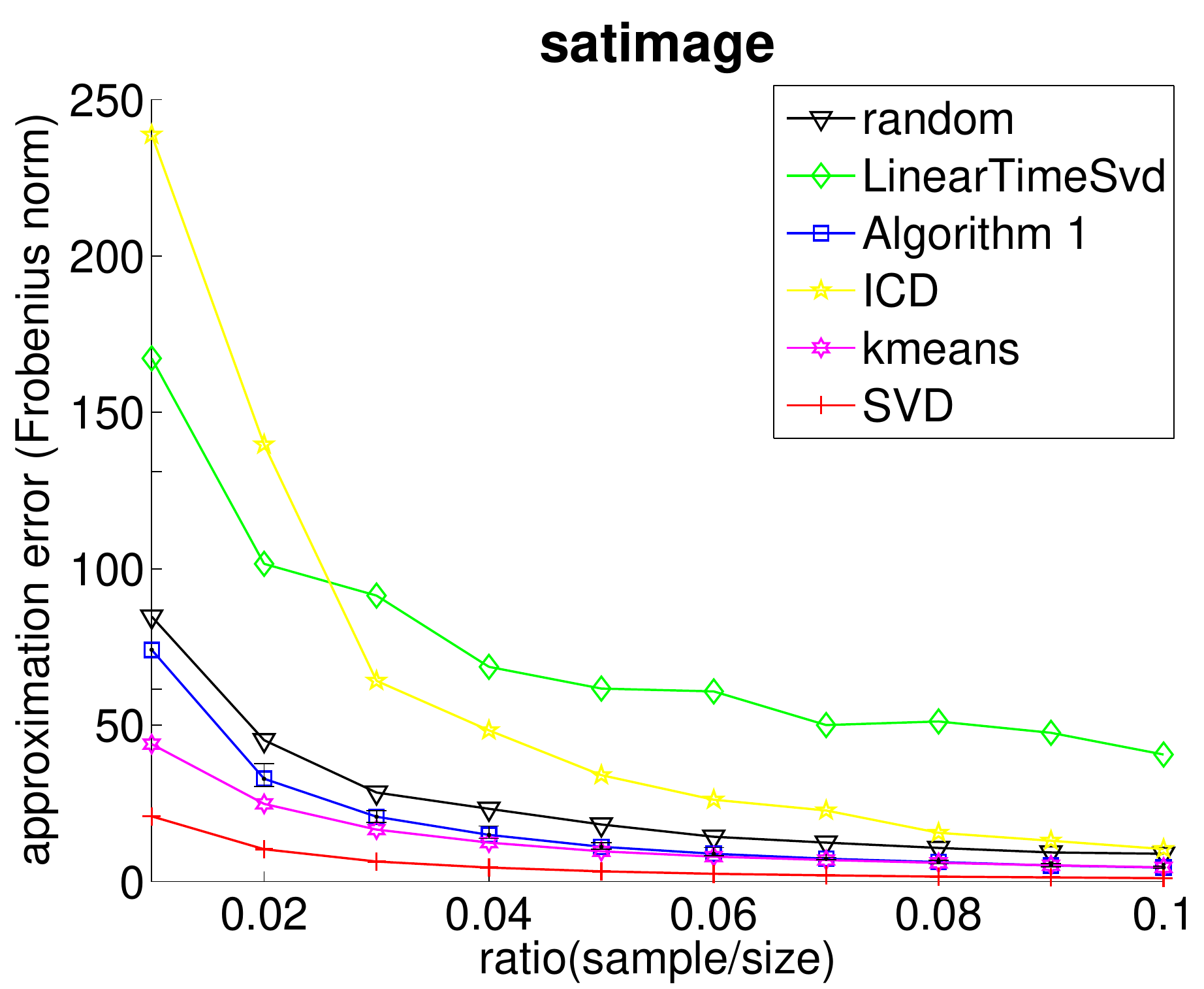}\tabularnewline
\end{tabular}
    \end{center}
    \vspace{-20pt}
    \caption{Nystr\"{o}m approximation errors for kernel matrices. The X-axis is the sampling ratio given as sample size divided by the matrix size.
    The Y-axis is the approximation error given in Frobenius norm. The tested algorithms are: random,
    LinearTimeSVD, Algorithm 1, ICD, k-means and SVD}
    \label{fig:kernel_approx_errors}
\end{figure}


\subsection{ General Matrices }
\label{sec:general_mat_exp} We evaluate the performance of Algorithm
1 on general matrices by comparing it to a random choice of
sub-sample. We use the full SVD as a benchmark that theoretically
achieves the
 best accuracy. The approximation error is measured by
$\vectornorm{\hat{M}-M}_2$.

The testing matrices in this section were chosen to have non-random
spectra with random singular subspaces. Initially, a non-random
diagonal matrix $L$ is chosen with non-increasing diagonal entries.
$L$ will serve as the spectrum of our testing matrix. Then, two
random unitary matrices $U$ and $V$ are generated. Our testing
matrix is formed by $ULV^T$. We examine two degrees of spectrum
decay: linear decay (slow) and exponential decay (fast).

The error is presented in $L_2$ norm and we vary the sample size to
be between 1\%-10\% of the matrix size. The presented results are
from an averaging of 20 iterations to reduce the statistical
variability. For simplicity, we produce results only for $500\times
500$ square matrices.

The results are presented in Fig. \ref{fig:general_approx_errors}.
When the spectrum decays slowly, Algorithm 1 has no advantage over
random sample selection. It produces overall pretty bad results. But
the situation is much different in the presence of a fast spectrum
decay. Algorithm 1 displays good results when the sample size allows
it to capture most of the significant singular values of the data
(at a sample rate of about 3\%). It is interesting to note that
random sample selection does not lag far behind. This hints that, on
average, any sample is a good sample as long as it captures more
data than the numeric rank of the matrix.

\begin{figure}[h]
    \begin{center}
\begin{tabular}{cc}
\includegraphics[width=0.47\columnwidth]{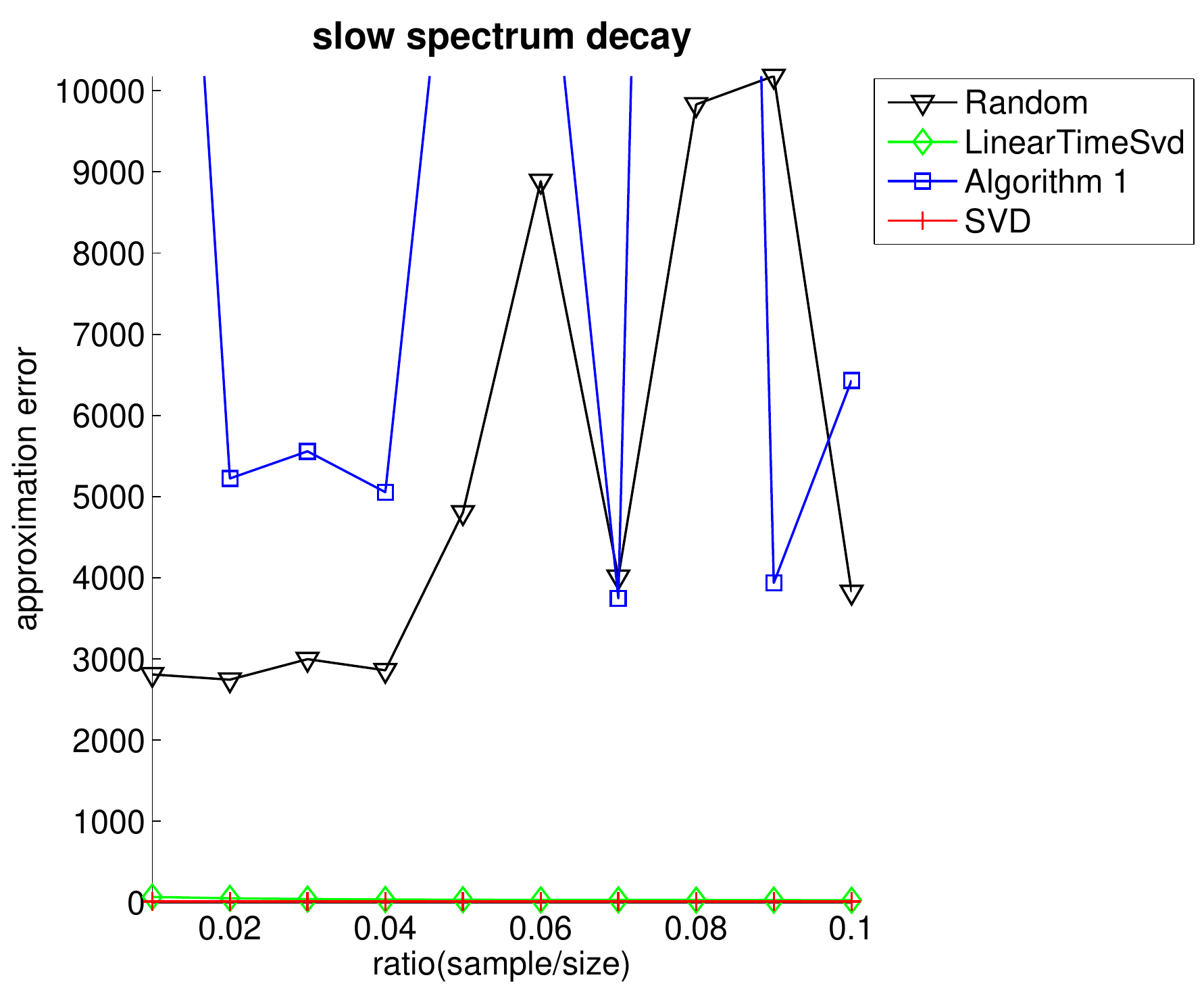} & 
\includegraphics[width=0.47\columnwidth]{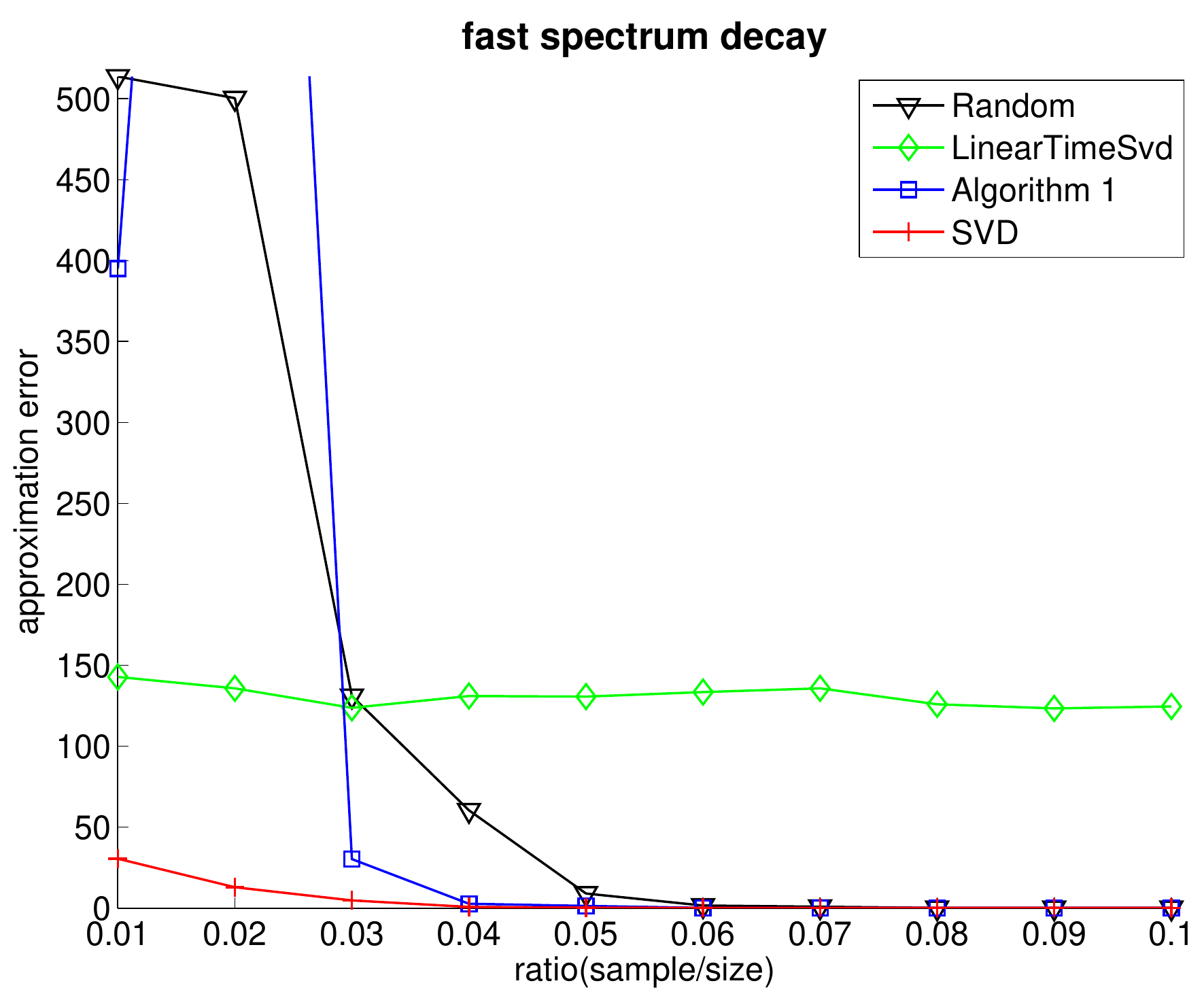}\tabularnewline
\includegraphics[width=0.47\columnwidth]{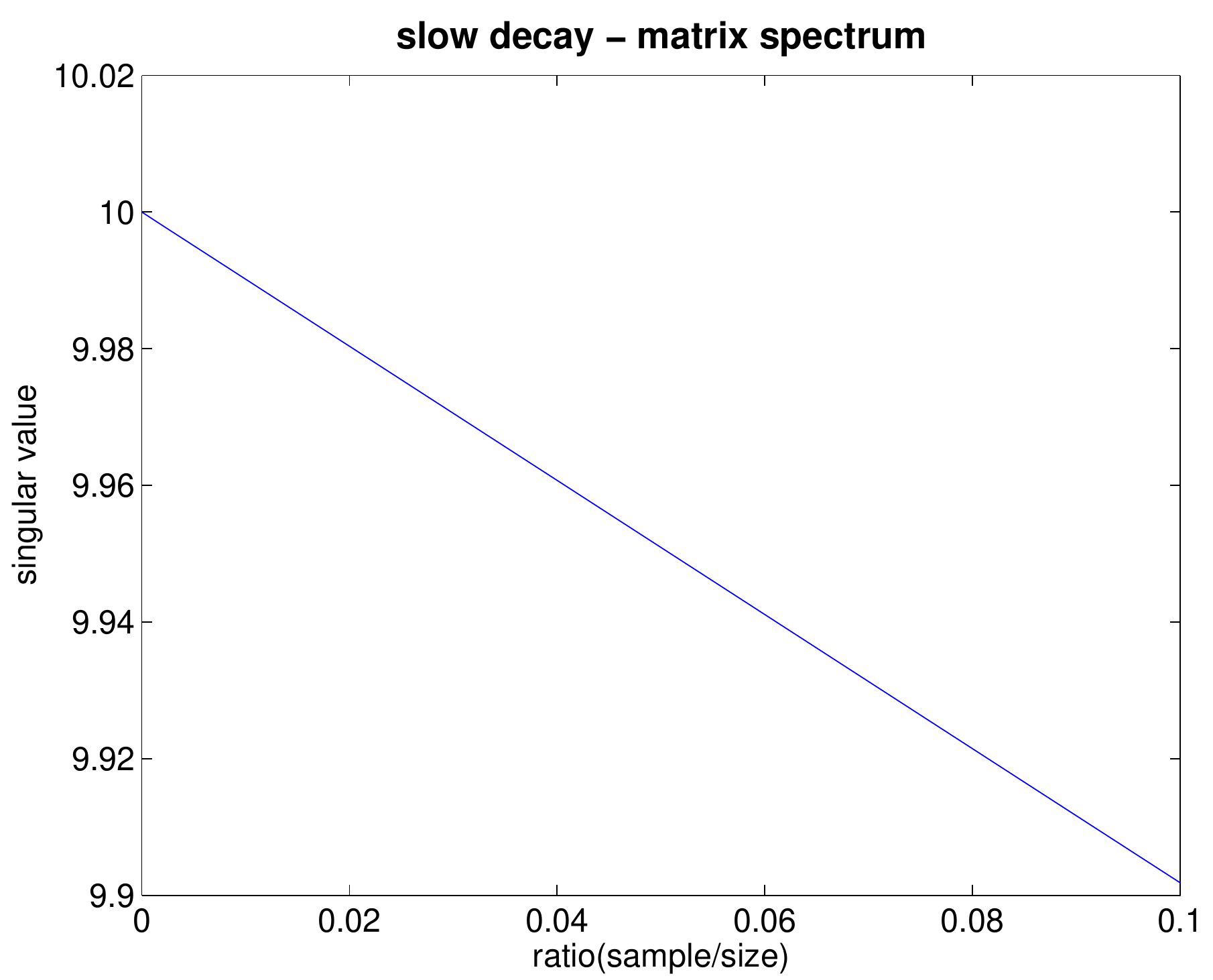} & \includegraphics[width=0.462\columnwidth]{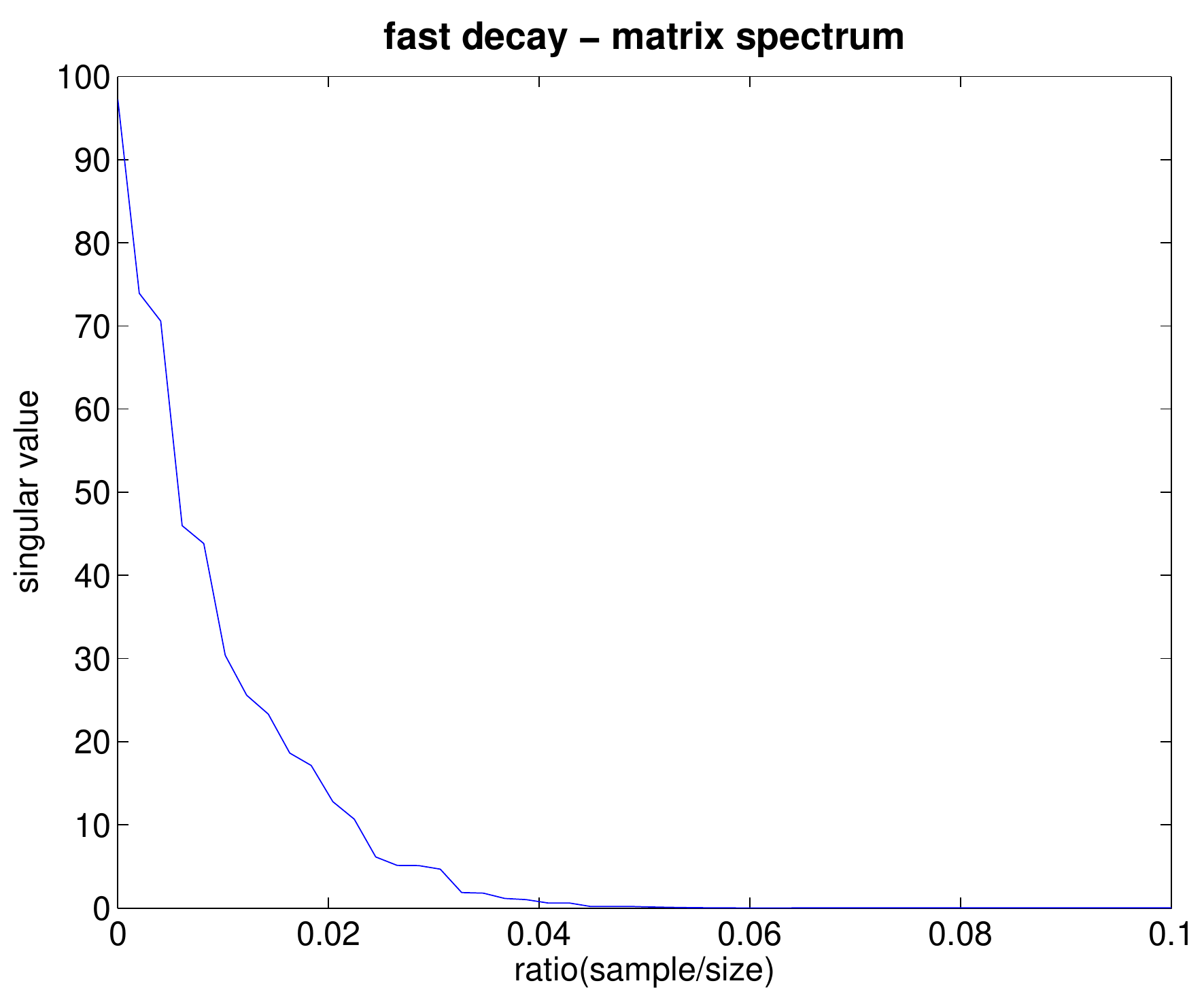}\tabularnewline
\end{tabular}
    \end{center}
    \vspace{-20pt}
    \caption{Nystr\"{o}m approximation errors for random matrices. The X-axis is the sampling ratio given as sample size divided by the matrix size.
    The Y-axis is the approximation error given in $L_2$ norm. The tested algorithms are Random, LinearTimeSVD, Algorithm 1 and SVD. }
    \label{fig:general_approx_errors}
\end{figure}

\subsection{ Non-Singularity of Sample Matrix }

We empirically examine the relationship between the Nystr\"{o}m
approximation error and the non-singularity of the sub-sample
matrix. The approximation error is measured in $L_2$-norm and the
non-singularity of $A_M\in\mathbb{R}^{s\times s}$ is measured by the
magnitude of $\sigma_s\left(A_M\right)$. We employ testing matrices
similar to those in section \ref{sec:general_mat_exp}. These feature
a non-random spectrum and random singular subspaces. The sample was
chosen to be 5\% of the data of the matrix. In this test, we compare
between the random sample selection algorithm and Algorithm 1.
Each algorithm ran 100 times on each matrix. The
results of each run were recorded. Figure \ref{fig:non_sing_of_a_m}
features a log-log scale plot of the approximation error as a
function of $\sigma_s\left(A_M\right)$. The performance of the
different algorithm versions is compared. We arrive at similar
conclusions to those in section \ref{sec:general_mat_exp}. Our
algorithms do no better than random sampling when the spectrum decay
is slow, but consistently outperforms the random selection in the
presence of fast spectrum decay. Figure \ref{fig:non_sing_of_a_m}
also shows a strong negative correlation between the variables in
all the examined matrices. Hence, a large $\sigma_s\left(A_M\right)$
implies a small approximation error. The linear shape of the graphs,
drawn in a log-log scale, suggests that this relationship is
exponential. The results hint at a possible extension of the
Nystr\"{o}m procedure to a Monte-Carlo method: Algorithm 1 can be
run many times. In the end, we choose the sample for which
$\sigma_s\left(A_M\right)$ is maximal.

\begin{figure}[h]
    \begin{center}
\begin{tabular}{cc}
\includegraphics[width=0.47\columnwidth]{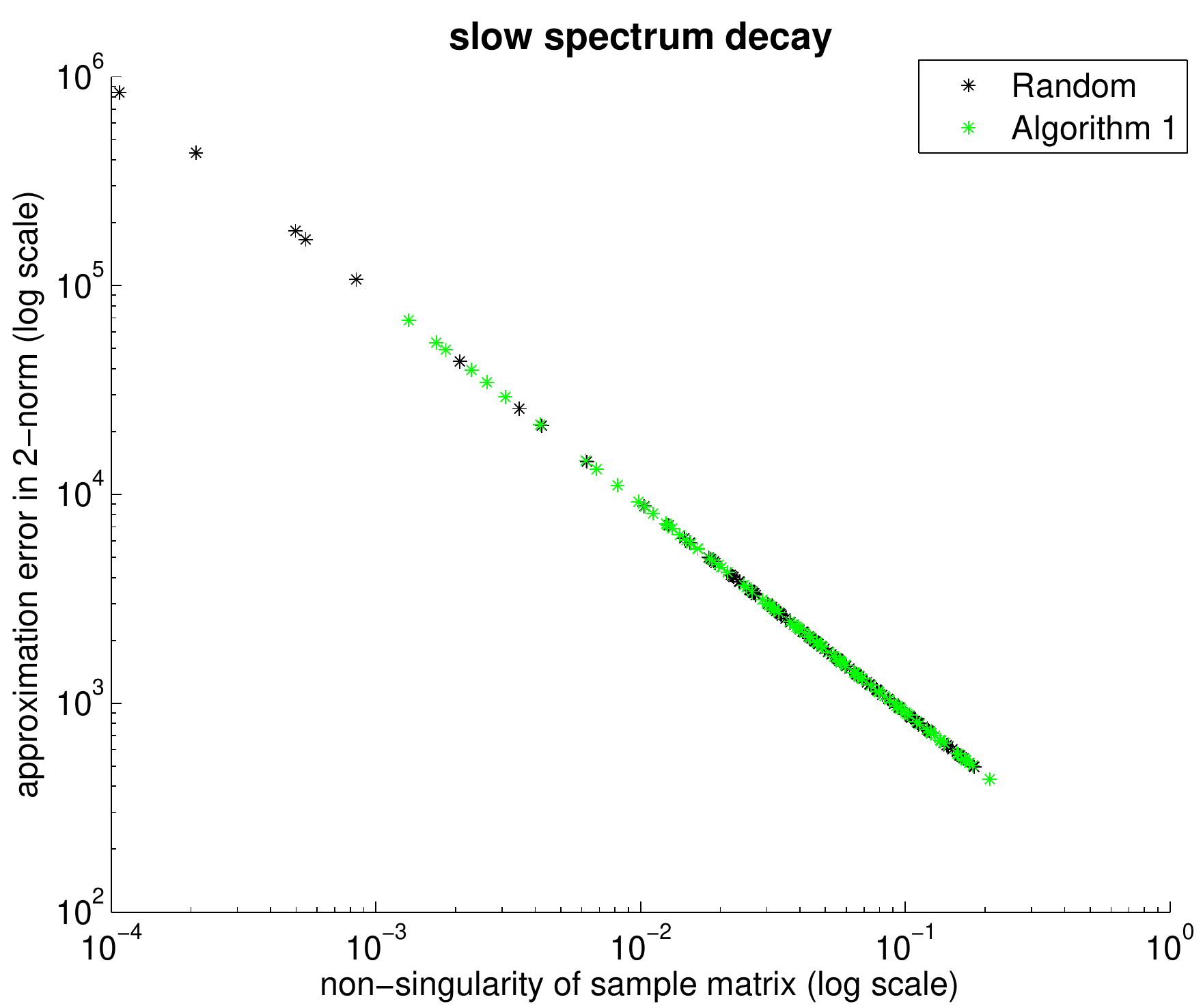} & \includegraphics[width=0.475\columnwidth]{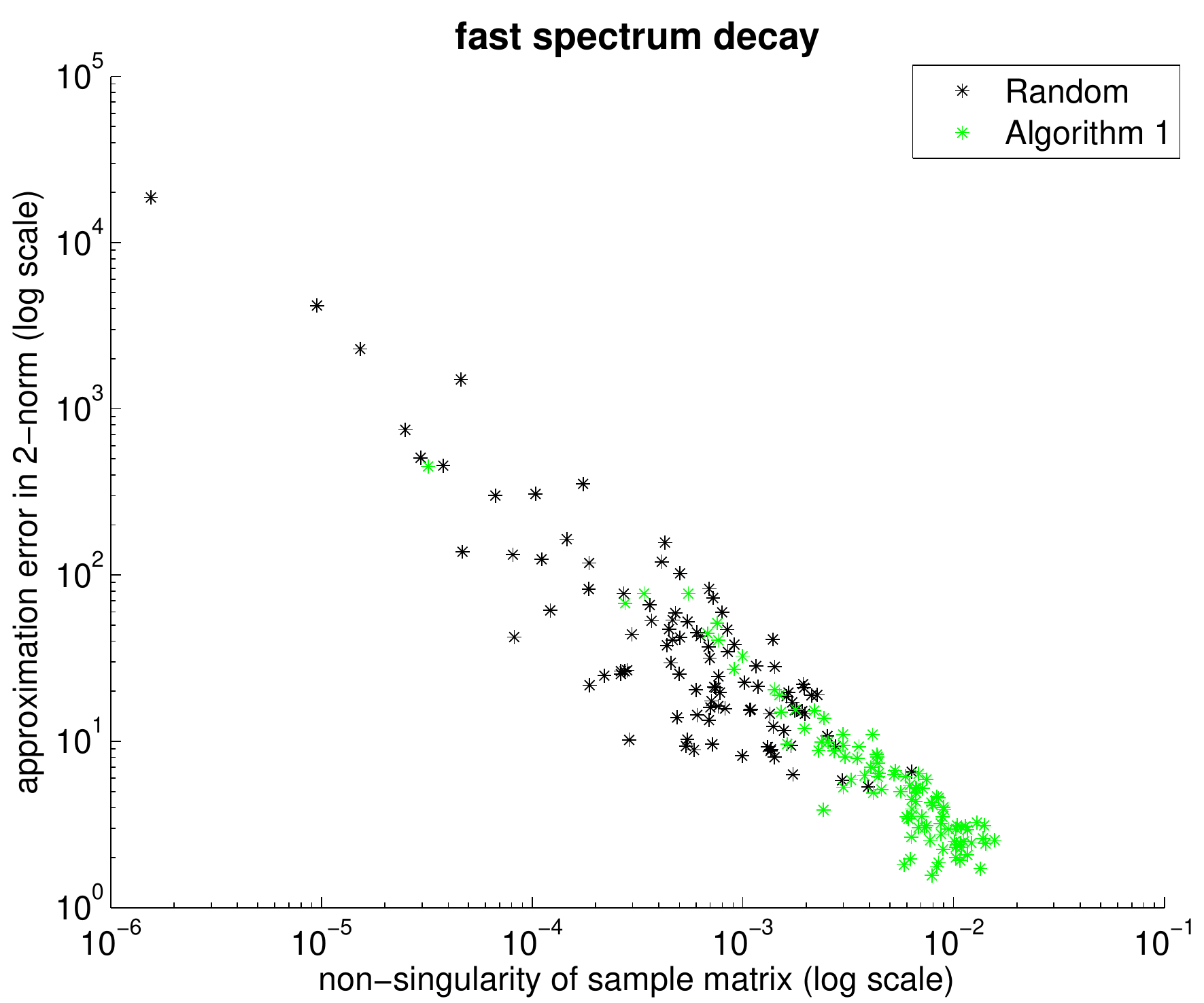}\tabularnewline
\includegraphics[width=0.47\columnwidth]{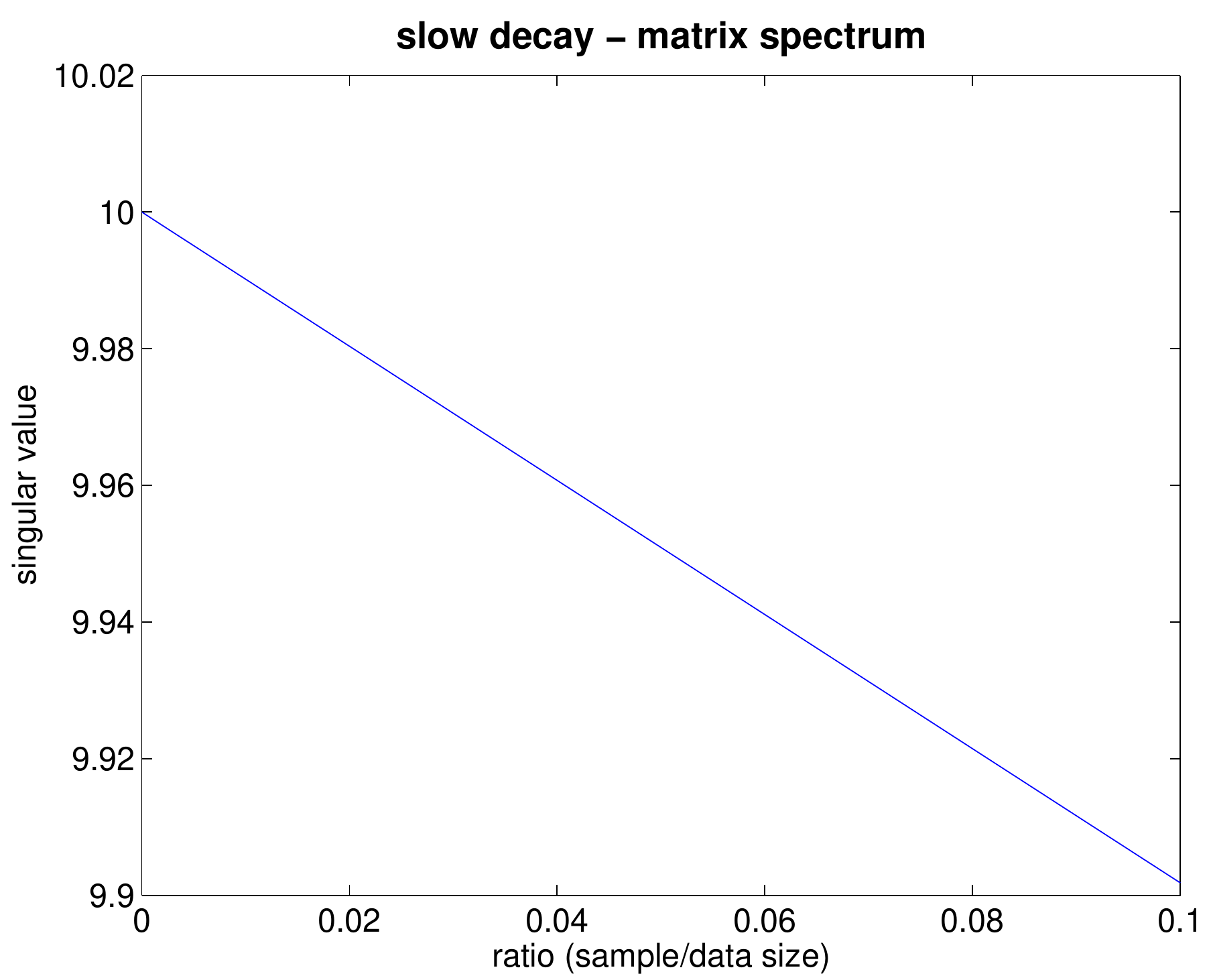} & \includegraphics[width=0.455\columnwidth]{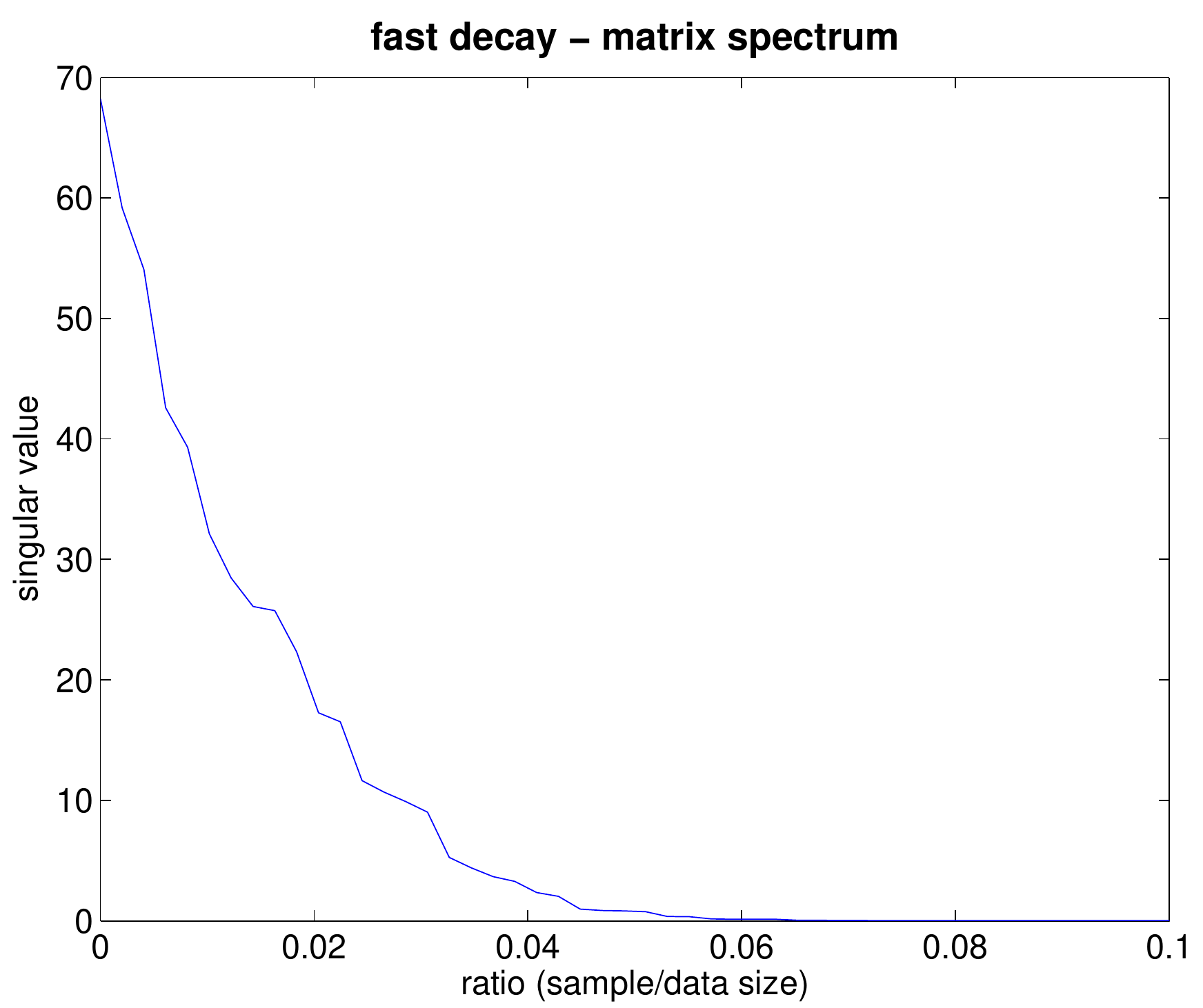}\tabularnewline
\end{tabular}
    \end{center}
    \vspace{-20pt}
    \caption{ Errors in Nystr\"{o}m approximation as a function of $\sigma_s\left(A_M\right)$ }
    \label{fig:non_sing_of_a_m}
\end{figure}

\section{ Conclusion and Future Research }
\label{sec:conclusions}

In this paper, we showed how the Nystr\"{o}m approximation method
can be used to find the canonical SVD and EVD of a general matrix.
In addition, we developed a sample selection algorithm that operates
on general matrices. Experiments have been performed on real-world
kernels and random general matrices. These show that the algorithm
performs well when the spectrum of the matrix decays quickly and the
sample is sufficiently large to capture most of the energy of the
matrix (the number of non-zero singular values). Another experiment
showed that the non-singularity of the sample matrix (as measured by
the magnitude of the smallest singular value) is exponentially
inversely related to the approximation error. This shows that our
theoretical reasoning in Lemma \ref{lem:span_approx_error} is
qualitatively on par with empirical evidence.

Future research should focus on additional formalization of the
relationship between the smallest singular value of the sample
matrix and the Nystr\"{o}m approximation error. Another interesting
possibility is to find a constrained class of matrices and develop a
sample selection algorithm to take advantage of the constraint. Some
classes of matrices may be easier to sub-sample with respect to the
Nystr\"{o}m method.

%

\end{document}